\documentclass[12pt,draft]{amsart}
\usepackage[all]{xy}
\usepackage{amsfonts}
\usepackage{amssymb}

\usepackage[english]{babel}
\usepackage{ogonek}

\usepackage{enumerate}

\renewcommand{\le}{\leqslant}
\renewcommand{\ge}{\geqslant}

\textheight23cm \textwidth16.5cm \addtolength{\topmargin}{-25pt}
\evensidemargin0.cm \oddsidemargin0.cm

\newtheorem{teo}{Theorem}[section]
\newtheorem{lem}[teo]{Lemma}
\newtheorem{prop}[teo]{Proposition}

\newtheorem{cor}[teo]{Corollary}

\newtheorem{dfn}[teo]{Definition}
\newtheorem{rk}[teo]{Remark}
\newtheorem{ex}[teo]{Example}

\def\<{\langle}
\def\>{\rangle}

\def\a{\alpha}
\def\b{\beta}

\def\e{\varepsilon}

\def\r{\rho}

\def\s{\sigma}
\def\t{\tau}

\def\f{{\varphi}}

\def\G{{\Gamma}}

\def\C{{\mathbb C}}

\def\Z{{\mathbb Z}}

\def\cC{{\mathcal C}}
\def\cP{{\mathcal P}}
\def\cU{{\mathcal U}}

\def\End{\mathop{\rm End}\nolimits}

\def\Id{\operatorname{Id}}

\def\Tr{\operatorname{Tr}}

\def\1{\mathbf 1}

\newcommand{\wh}[1]{\widehat{#1}}


\def\Fix{\operatorname{Fix}}


\begin{document}

\title[Reidemeister type zeta functions]
{New zeta functions of Reidemeister type
and twisted Burnside-Frobenius theory}

\author{Alexander Fel'shtyn}
\thanks{The work of A.F. and M.Z. (Sections \ref{sec:prerem} and \ref{sec:dynrepzet})
is funded by the  Narodowe Centrum Nauki of Poland (NCN) (grant No.~\!2016/23/G/ST1/04280).}
\address{Instytut Matematyki, Uniwersytet Szczecinski,
ul. Wielkopolska 15, 70-451 Szczecin, Poland} \email{felshtyn@mpim-bonn.mpg.de,
 fels@wmf.univ.szczecin.pl}
 
\author{Evgenij Troitsky}
\thanks{The work of E.T. (Sections \ref{sec:compactreidnum},
 \ref{sec:finfindim}, \ref{sec:univversprof}, \ref{sec:calc}, and \ref{sec:examcounter}) is
supported by the Russian Science Foundation under grant 16-11-10018.}
\address{Dept. of Mech. and Math., Moscow State University,
119991 GSP-1  Moscow, Russia}
\email{troitsky@mech.math.msu.su}
\urladdr{
http://mech.math.msu.su/\~{}troitsky}

\author{Malwina Zi\k{e}tek}
\address{Instytut Matematyki, Uniwersytet Szczecinski,
ul. Wielkopolska 15, 70-451 Szczecin, Poland} \email{malwina.zietek@gmail.com}

\keywords{Reidemeister number,  
twisted conjugacy class, 
Burnside-{Frobenius} theorem, %
residually finite group,  
unitary dual,
matrix coefficient, 
rational (finite) representation, 
Gauss congruences, 
zeta function}
\subjclass[2000]{20C; 
20E45; 
22D10; 
37C25; 
47H10; 
54H25 
}

\begin{abstract}
We introduce new zeta functions related to an endomorphism
$\phi$ of a discrete group $\G$. They are of two types: counting
numbers of fixed ($\rho\sim \rho\circ\phi^n$) 
irreducible representations for iterations of $\phi$ 
from an appropriate dual space of $\G$ and counting
Reidemeister numbers $R(\f^n)$ of different compactifications.
Many properties of these functions and their coefficients
are obtained. In many cases it is proved that these zeta functions
coincide.
The Gauss congruences are proved.
Useful asymptotic formulas for the zeta functions are found. 
Rationality is proved for some examples, which give also
the first counterexamples simultaneously for TBFT 
($R(\phi)$=the number of fixed irreducible unitary representations)
and TBFT$_f$ ($R(\phi)$=the number of fixed irreducible unitary 
finite-dimensional representations)
for an automorphism $\phi$ with $R(\phi)<\infty$.
\end{abstract}

\maketitle

\section*{Introduction}
Let $\phi$ be an endomorphism of a group $\G$.
The \emph{Reidemeister number} $R(\phi)$ of $\phi$
is the number of its \emph{Reidemeister} or
\emph{twisted conjugacy classes}
$$
\{g\}_\phi:=\{xg\phi(x^{-1}),\quad x\in\G\}.
$$

In this paper we will be mostly interested in discrete
groups (using the notation $\G$, $\phi$) and in compact
(Hausdorff) groups (using the notation $G$, $\f$).

The first group of problems related Reidemeister
numbers includes a study of validity of
the TBFT (twisted Burnside-Frobenius theorem (or theory))
for different classes of groups, a proof of the
Gauss congruences for the Reidemeister numbers
of iterations (mostly using the TBFT), and
a study of rationality of the corresponding
Reidemeister zeta function.  

The first formulation of TBFT, due to A.~Fel'shtyn and R.~Hill, 
says that the $R(\phi)$=the number of fixed irreducible unitary representations, if one of them is finite.
It was proved for automorphisms of abelian, compact, and abelian-by-finite
groups in \cite{FelHill,FelHillForum,feltroKT}. 
In \cite{FelTroVer} a counterexample to the TBFT
was detected and it led to a new form of the problem
TBFT$_f$, where it is conjectured that if $R(\phi)<\infty$,
then it coincides with the number of $\rho\sim\rho\circ\phi$,
where $\rho$ is a finite-dimensional irreducible representation.
The TBFT$_f$ was proved for polycyclic groups (for automorphisms
in \cite{polyc} and for endomorphisms in \cite{FelTroRJMP2018}).
In \cite{feltroKT} it was observed that TBFT$_f$ is not true
for some infinite groups with finite number of
ordinary conjugacy classes. 
In \cite{TroTwoEx} a counterexample to
TBFT$_f$ was detected among infinitely generated residually
finite groups. 
(We will give a common counterexample for TBFT and TBFT$_f$
in the present paper.) 
For related results we refer to \cite{ncrmkwb,TroTwoEx}.
Concerning rationality of the Reidemeister zeta function
we refer to the presentation in \cite{FelshB} and a recent
paper \cite{DekimpeTerBus2017ArxRation}.

The second group of problems, related to the first one, is
to determine groups with the $R_\infty$-\emph{property}
(each automorphism has infinite Reidemeister number).
The approaches differ for various classes under consideration
(e.g., branch groups, lattices, linear groups, solvable groups, etc.),
so in fact we have a splitted system of problems, rather one entire problem.
For endomorphisms the definition is not appropriate 
for a direct consideration,
because each group has an endomorphism $\psi$ with $R(\psi)=1$, namely 
$\psi:g\mapsto e$, for any $g$ (see the beginning of 
\cite{FelTroRJMP2018} for more information). 
Many papers were devoted to this problem recently.
This is not a main subject of the present paper and we only
refer to the following selection of papers on the problem
and bibliography therein 
\cite{ll,%
FelLeonTro,GK,gowon,%
MubeenaSankaran2014TrGr,%
Romankov,DekimpeGoncalves2014BLMS,FelNasy2016JGT,%
HaLee2015,FelTroJGT}.

We introduce here several interrelated zeta functions
determined by an endomorphism $\phi$ of a discrete group $\G$.
First we introduce \emph{dynamic representation theory}
zeta functions with $n^{th}$ coefficients equal
to the number of those irreducible unitary representations
(respectively, finite-dimensional, respectively, finite 
irreducible unitary representations) $\rho$ such that
$\rho\sim \rho\circ\phi^n$ (supposing these numbers to
be finite) (Definition \ref{dfn:represreidzeta}). 
We prove the Gauss congruences
for these coefficients, using the existence of an
appropriate dynamical system on a part of the unitary
dual with periodic points of period $n$ being exactly
the above mentioned representation classes  
(Theorem \ref{teo:congrue_rep_reide}).

Then we introduce a natural notion of an \emph{admissible
compactification} of $\G$ (Definition \ref{dfn:admcom})
with the profinite completion and the universal compactification
as main examples. We define the corresponding \emph{compactification
Reidemeister zeta function}.
Then we prove three statements about Reidemeister classes of
an endomorphism $\f$ of a compact Hausdorff group $G$:
1)calculation
of a finite Reidemeister number as the number of irreducible
representations $\rho$ such that
$\rho\sim \rho\circ\phi$ (Theorem \ref{teo:peterweylforendcomp}); 
2) description of those matrix
coefficients, which are constant on Reidemeister classes (Lemma \ref{lem:funczional}); and
3) proof of the TBFT in this situation (Corrolary \ref{cor:tbftcomp}).   

We prove that if $R(\f)<\infty$ the above mentioned representations
must be finite (not only finite-dimensional) (Theorem \ref{teo:onlyfinite}).
 
Using these facts, we prove under finiteness conditions
the coincidence of representation theory zeta-functions (related
to finite-dimensional and finite representations) and compactification
zeta-functions (for the profinite and the universal compactifications)
(Theorem \ref{teo:coinc_of_zetas}).  
 
We obtain an asymptotic formula for these zeta-functions
in Theorem \ref{teo:formulazeta} and Corollary \ref{cor:asimfor}.

Then we develop an example from \cite{TroTwoEx} to give a proof
of rationality in this situation of the zeta functions under consideration
(Proposition \ref{prop:calczetaex}). As a corollary of these calculations, we obtain the first
example of a group and its automorphism, giving a counterexample
to the TBFT and the TBFT$_f$ simultaneously (Theorem \ref{teo:counterex}).

\smallskip
The results of Sections \ref{sec:prerem} and \ref{sec:dynrepzet}
are obtained by A.F. and M.Z.

The results of Sections \ref{sec:compactreidnum},
 \ref{sec:finfindim}, \ref{sec:univversprof}, \ref{sec:calc}, and \ref{sec:examcounter}
 are obtained by E.T.

\smallskip 
\textsc{Acknowledgement:} 
Preliminary results on the subject were obtained by A.F. anf E.T. 
under hospitality of the Max-Planck-Institut for Mathematics in Spring 2017.

The authors are indebted to V.~M.~Manuilov and A.~I.~Shtern
for helpful discussions.
 
The work of A.F. and M.Z. (Sections \ref{sec:prerem} and \ref{sec:dynrepzet})
is funded by the  Narodowe Centrum Nauki of Poland (NCN) (grant No.~\!2016/23/G/ST1/04280).

The work of E.T. (Sections \ref{sec:compactreidnum},
 \ref{sec:finfindim}, \ref{sec:univversprof}, \ref{sec:calc}, and \ref{sec:examcounter})
is supported by the Russian Science Foundation under grant 16-11-10018.
 
\section{Preliminaries}\label{sec:prerem} 

We will need the following basic observation.  
\begin{lem}\label{lem:epimreid} 
Suppose, $\phi$ is a (continuous) endomorphism of $\G$ 
and $N$ is a normal $\phi$-invariant subgroup of $\G$.
Then the map $\G\to \G/N$ maps Reidemeister classes of
$\phi$ onto Reidemeister classes of the induced endomorphism
of $\G/N$.
\end{lem}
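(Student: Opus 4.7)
The plan is to verify that the quotient map $p\colon\G\to\G/N$ descends to a well-defined surjection between the sets of Reidemeister classes, essentially by a direct computation. The $\phi$-invariance of $N$ (meaning $\phi(N)\subset N$) is exactly what is needed for the induced endomorphism $\bar\phi\colon\G/N\to\G/N$ to be well-defined; in the topological case continuity of $\bar\phi$ follows from the universal property of the quotient topology, since $p\circ\phi$ is continuous and constant on cosets of $N$.

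Next, I would check that $p$ sends a Reidemeister class of $\phi$ into a single Reidemeister class of $\bar\phi$. If $g'=xg\phi(x^{-1})$ for some $x\in\G$, then applying $p$ and using $p\circ\phi=\bar\phi\circ p$ gives
$$
p(g') \;=\; p(x)\,p(g)\,\bar\phi(p(x)^{-1}),
$$
so $p(g')\sim_{\bar\phi} p(g)$. Hence there is a well-defined set-theoretic map $\{g\}_\phi\mapsto\{p(g)\}_{\bar\phi}$ from Reidemeister classes of $\phi$ to Reidemeister classes of $\bar\phi$.

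For surjectivity of this induced map, I would simply lift: given a Reidemeister class $\{\bar y\}_{\bar\phi}$ in $\G/N$, pick any preimage $y\in\G$ of $\bar y$ under $p$. Then $\{y\}_\phi$ is a Reidemeister class of $\phi$ whose image under the induced map is $\{p(y)\}_{\bar\phi}=\{\bar y\}_{\bar\phi}$, as required.

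There is no real obstacle here; the lemma is a routine compatibility statement. The only point to mention with care is the $\phi$-invariance of $N$, which is used twice (to define $\bar\phi$ and to ensure the identity $p\circ\phi=\bar\phi\circ p$ used in the computation above), and, in the continuous setting, the appeal to the quotient topology to get continuity of $\bar\phi$.
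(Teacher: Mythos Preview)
Your proof is correct and entirely adequate. In the paper the lemma is introduced merely as a ``basic observation'' with no proof supplied, so there is nothing to compare against; your direct verification is exactly the routine argument one would expect, and it covers all that is needed for the applications later in the paper (namely the inequalities $R(\phi)\ge R(\bar\phi)$). One minor remark: the phrasing ``maps classes onto classes'' can also be read as asserting that the image $p(\{g\}_\phi)$ equals the full class $\{p(g)\}_{\bar\phi}$, not merely is contained in it; this follows immediately from surjectivity of $p$ (lift $\bar x$ to $x$ and note $\bar x\,p(g)\,\bar\phi(\bar x^{-1})=p(xg\phi(x^{-1}))$), and you may wish to add a sentence to that effect for completeness.
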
 
 
Also we need the following results \cite{FelHillForum,FelshB} about
the Reidemeister classes of an
endomorphism $\f$ of a finite group $F$. 
Consider the action of $\f$ on usual (non-twisted) 
class functions (i.e. functions, which are constant on usual
conjugacy classes): $B: f\mapsto f\circ\f$. Using the
orbit-stabilizer theorem and the TBFT for finite groups
one can prove that $R(\f)=\Tr B=$the number of $\f$-fixed
usually conjugacy classes. This implies rationality of
the Reidemeister zeta function (see \cite[Theorem 17]{FelshB})
via the following calculation:
\begin{equation}\label{eq:razzetaforfin}
R_\f(z)=\exp \left(\sum_{n=1}^\infty \frac{R(\f^n)}{n}z^n\right)
=\exp \left(\sum_{n=1}^\infty \frac{\Tr B^n}{n}z^n\right)
=\det\exp \left(\sum_{n=1}^\infty \frac{B^n}{n}z^n\right)
\end{equation}
$$
=\det\exp(-\log(1-Bz))=\frac{1}{\det(1-Bz)}
$$
(at least for $|z|<1$). 
 
Suppose, $X$ is a set and $\sigma:X\to X$ is a mapping. 
Denote by $\Fix(\sigma^n)$ the set of fixed points of
$\sigma^n$, i.e., $n$-\emph{periodic points of} $\sigma$.
Suppose, its cardinality $|\Fix(\sigma^n)|<\infty$ for any $n$.
Then the \emph{Artin--Mazur zeta function} is defined by
$$
AM_\sigma(z)=\exp \left(\sum_{n=1}^\infty \frac{|\Fix(\sigma^n)|}{n}z^n\right).
$$
It is well known (see, e.g. \cite{Baladi}) that
using the Taylor series of $\log(1-z^p)$ the function 
$AM_\sigma(z)$ can be written as a formal
Euler product over all \emph{primitive} 
periodic orbits $\t=\{x_\t,\s(x_\t),\dots,\s^{p-1}(x_\t)$, 
with $p=p(\t)$, such that $\s^p(x_\t)=x_\t$ and
$\s^k(x_\t)\ne x_\t$  for $0<k<p$. Namely,
\begin{equation}\label{eq:baladi}
AM_\sigma(z)=\prod_{\t\mathrm{\: primitive\: periodic\: orbit}}
\frac{1}{1-z^{p(\t)}}.
\end{equation}
The elements of a primitive periodic orbit of length $p$
 are called $p$-\emph{periodic} elements.
If $|X|<\infty$ this implies \emph{rationality} of $AM_\sigma(z)$.
In the case of a bijective $\sigma$ and finite $X$ 
the following \emph{functional equation} is well-known in the field (see e.g. \cite{Yoshida}) 
\begin{equation}\label{eq:yoshida_fe}
AM_\sigma(1/z)=(-z)^{|X|} \det(\s_C) AM_\sigma(z),
\end{equation}
where $\s_C:C(X)\to C(X)$ is the induced linear mapping,
$C(X)\cong \C^{|X|}$. One
can generalize (\ref{eq:yoshida_fe}) to the non-bijective
case in the following way (see e.g. \cite[Sect. 2.3.2]{FelshB}) 
\begin{multline}\label{eq:fe_endo}
AM_\sigma(1/z)=\prod_\t \frac{1}{1-z^{-p(\t)}} 
=\prod_\t \frac{-z^{p(\t)}}{1-z^{p(\t)}}
\\
=\prod_\t (-z^{p(\t)}) \prod_\t \frac{1}{1-z^{p(\t)}}
=(-1)^a z^b AM_\sigma(z),
\end{multline}
where $a$ is the number of primitive orbits and
$b$ is the number of periodic elements. If $\s$ is a bijection,
each point is periodic and $b=|X|$, while 
$$
\det(\s_C)=\prod_\t (-1)^{p(\t)-1}= \prod_\t (-1)^{p(\t)}\cdot
(-1)^a=(-1)^{|X|}\cdot (-1)^a 
$$
and we arrive to (\ref{eq:yoshida_fe}).
 
\section{Dynamic representation theory zeta functions}\label{sec:dynrepzet} 
Suppose, $\phi$ is an endomorphism of a discrete group $\G$.
Generally the correspondence $\widehat{\phi}:\rho\mapsto \rho\circ\phi$
does not define a dynamical system (an action of the semigroup
of positive integers) on the unitary dual $\widehat{\G}$    
or its finite-dimensional $\widehat{\G}_f$ part, or finite 
$\widehat{\G}_{ff}$ part, because in contrast with the
authomorphism case, the representation $\rho\circ\phi$
may be reducible. Here the
\emph{unitary dual} is the space of equivalence classes of
unitary irreducible representations of $\G$, equipped with the
\emph{hull-kernel} topology, $\widehat{\G}_f$ is its subspase
formed by finite-dimensional representations, and  
$\widehat{\G}_{ff}$ is formed by \emph{finite} representations.

Nevertheless we can consider representations $\rho$ such that
$\rho \sim \rho\circ\phi$.

\begin{dfn}\label{dfn:represreidnum}
{\rm
A \emph{representation theory Reidemeister number}
$RT(\phi)$
is the number of all $[\rho]\in \widehat{\G}$ such that
$\rho \sim \rho\circ\phi$. Taking $[\rho]\in \widehat{\G}_f$
(respectively $[\rho]\in \widehat{\G}_{ff}$) we obtain
$RT^f(\phi)$ (respectively $RT^{ff}(\phi)$). Evidently
$RT(\phi)\ge RT^f(\phi)\ge RT^{ff}(\phi)$.
}
\end{dfn}

\begin{dfn}\label{dfn:represreidzeta}
{\rm
If these numbers are finite for all powers of $\phi$,
we define the corresponding \emph{dynamic representation
theory zeta functions}
$$
RT_\phi(z):=\exp \left(\sum_{n=1}^\infty \frac{RT(\phi^n)}{n}z^n\right),\quad
RT^f_\phi(z):=\exp \left(\sum_{n=1}^\infty \frac{RT^f(\phi^n)}{n}z^n\right),
$$
$$
RT^{ff}_\phi(z):=\exp \left(\sum_{n=1}^\infty \frac{RT^{ff}(\phi^n)}{n}z^n\right).
$$
}
\end{dfn}

The importance of these numbers is justified by the
following dynamical interpretation.
In \cite{FelTroRJMP2018} the following
``dynamical part'' of the dual space, where $\widehat{\phi}$ and all
its iterations $\widehat{\phi}^n$  define a dynamical system,
was defined. 

\begin{dfn}[Def. 2.1 in \cite{FelTroRJMP2018}]
{\rm
A class $[\rho]$ is called
a $\wh\phi$-\textbf{f}-point,
if $\rho\sim \rho\circ\phi$ (so, these are the points
under consideration in above definitions).
}
\end{dfn}

\begin{dfn}[Def. 2.2 in \cite{FelTroRJMP2018}]
{\rm
An element $[\rho]\in \wh{\G}$ (respectively, in $\wh{\G}_f$
or $\wh{\G}_{ff}$)
is called $\phi$-\emph{irreducible} if $\r\circ \phi^n$ is
irreducible for any $n=0,1,2,\dots$.

Denote the corresponding subspaces of $\wh{\G}$ (resp., $\wh{\G}_f$
or $\wh{\G}_{ff}$)
by $\wh{\G}^\phi$ (resp., $\wh{\G}^\phi_f$ or $\wh{\G}^\phi_{ff}$). 
}
\end{dfn}

\begin{lem}[Lemma 2.4 in \cite{FelTroRJMP2018}]\label{lem:keylem}
Suppose, the representations $\rho$ and $\rho\circ\phi^n$ are
equivalent  for some $n\geq 1$. Then $[\rho]\in \wh \G^\phi$.
\end{lem}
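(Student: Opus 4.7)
The plan is to argue by contradiction, exploiting the elementary fact that reducibility of a representation is preserved under pre-composition with any endomorphism, combined with iteration of the hypothesis $\rho\sim\rho\circ\phi^n$.

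First I would record the preservation principle: if a representation $\pi$ of $\G$ has a proper non-trivial closed invariant subspace $V$, then for every $g\in\G$ one has $(\pi\circ\phi)(g)V=\pi(\phi(g))V\subseteq V$, since $\phi(g)\in\G$. Hence $V$ is also invariant for $\pi\circ\phi$, and so $\pi\circ\phi$ is reducible. Iterating, if $\pi$ is reducible then $\pi\circ\phi^m$ is reducible for every $m\ge 0$.

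Next, I would iterate the hypothesis. Composing $\rho\sim\rho\circ\phi^n$ on the right with $\phi^n$ repeatedly (equivalence of representations is preserved by such composition, via the same intertwining operator) yields $\rho\sim\rho\circ\phi^{kn}$ for every $k\ge 1$. In particular each $\rho\circ\phi^{kn}$ is irreducible, being equivalent to the irreducible $\rho$.

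Now suppose, toward a contradiction, that $\rho\circ\phi^j$ is reducible for some $j\ge 1$. Pick any $k\ge 1$ with $kn\ge j$, and set $m:=kn-j\ge 0$. By the preservation principle applied $m$ times to $\rho\circ\phi^j$, the representation $\rho\circ\phi^{j+m}=\rho\circ\phi^{kn}$ is reducible, contradicting the previous paragraph. Therefore $\rho\circ\phi^j$ is irreducible for every $j\ge 1$, and the case $j=0$ is just the hypothesis that $[\rho]\in\wh\G$; hence $[\rho]\in\wh\G^\phi$.

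There is essentially no obstacle: the only point that must be handled with a little care is the preservation lemma, and it is immediate from the definition of $\pi\circ\phi$. The same argument applies verbatim with $\wh\G$ replaced by $\wh\G_f$ or $\wh\G_{ff}$, since composition with $\phi$ cannot increase the dimension of a representation, so finite-dimensionality and finiteness of $\rho$ (passed down from $\rho\sim\rho\circ\phi^{kn}$) are automatic.
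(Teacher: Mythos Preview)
Your argument is correct. The contradiction scheme---showing that reducibility propagates forward under composition with $\phi$, while the hypothesis $\rho\sim\rho\circ\phi^{n}$ forces irreducibility along the arithmetic progression $\{kn\}$---is exactly the natural route, and each step is justified.

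Note, however, that the present paper does not supply its own proof of this lemma: it is quoted verbatim as Lemma~2.4 of \cite{FelTroRJMP2018} and referenced for the details. So there is no in-paper argument to compare against; your proof fills that gap and matches the standard reasoning one would expect in the cited source. The only minor remark is that your closing sentence about $\wh\G_f$ and $\wh\G_{ff}$, while true, is not needed for the lemma as stated (membership in $\wh\G^\phi$ is all that is claimed), though it does anticipate the parallel use in Corollary~\ref{cor:periodicanddyn}.
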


\begin{cor}[Corollary 2.5 in \cite{FelTroRJMP2018}]\label{cor:periodicanddyn}
Generally, there is no dynamical system defined by $\widehat{\phi}$
on $\wh \G$ (resp., $\wh \G_f$, or $\wh{\G}_{ff}$).
We have only the well-defined notion of a
$\wh\phi^n$-\textbf{f}-point.

A well-defined dynamical system exists on 
$\wh \G^\phi$ (resp, $\wh \G^\phi_f$, or $\wh{\G}^\phi_{ff}$). 
Its $n$-periodic points are exactly $\wh\phi^n$-\textbf{f}-points.
\end{cor}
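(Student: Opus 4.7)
The first assertion is essentially an observation: as already pointed out in the discussion preceding Definition \ref{dfn:represreidnum}, for a general endomorphism $\phi$ the composition $\rho\circ\phi$ of an irreducible $\rho$ need not remain irreducible, so the assignment $\wh\phi:[\rho]\mapsto[\rho\circ\phi]$ fails to send $\wh\G$ (or $\wh\G_f$, or $\wh\G_{ff}$) into itself. Nevertheless, for any fixed $n$ the condition $\rho\sim\rho\circ\phi^n$ depends only on the equivalence class $[\rho]$, which is what allows one to speak of a $\wh\phi^n$-\textbf{f}-point without having an underlying dynamical system.

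For the second assertion, the first step is to check that $\wh\phi$ restricts to a self-map of $\wh\G^\phi$. If $[\rho]\in\wh\G^\phi$, then by definition $\rho\circ\phi^m$ is irreducible for every $m\geq 0$; in particular $(\rho\circ\phi)\circ\phi^m=\rho\circ\phi^{m+1}$ is irreducible for every $m\geq 0$, hence $[\rho\circ\phi]\in\wh\G^\phi$. The same argument applies verbatim to $\wh\G^\phi_f$ and $\wh\G^\phi_{ff}$, because composition with $\phi$ neither alters the dimension nor takes a finite representation outside the class of finite representations. This yields a well-defined action of the additive semigroup $\N$ on each of the three subspaces.

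It remains to identify the $n$-periodic points. By construction, an $n$-periodic point of the restricted system is a class $[\rho]\in\wh\G^\phi$ with $\rho\sim\rho\circ\phi^n$, which is by definition a $\wh\phi^n$-\textbf{f}-point that happens to lie in $\wh\G^\phi$. For the reverse inclusion I would invoke Lemma \ref{lem:keylem}, which guarantees that any $[\rho]$ satisfying $\rho\sim\rho\circ\phi^n$ for some $n\geq 1$ already belongs to $\wh\G^\phi$; hence every $\wh\phi^n$-\textbf{f}-point automatically lies in the domain of the restricted dynamical system and is an $n$-periodic point of it. The two containments together give the stated equality.

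There is essentially no technical obstacle, since the non-trivial content is entirely concentrated in Lemma \ref{lem:keylem}, which is already available; everything else is direct bookkeeping with the definition of $\wh\G^\phi$, and the argument transfers unchanged to the finite-dimensional and the finite subspaces because neither property is disturbed by composition with $\phi$.
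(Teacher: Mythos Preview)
Your argument is correct and is precisely the intended one: the paper does not give its own proof here but simply cites \cite{FelTroRJMP2018}, and the reasoning there is exactly what you wrote---$\wh\phi$ restricts to $\wh\G^\phi$ by a one-line shift of indices, and Lemma~\ref{lem:keylem} supplies the only non-obvious inclusion when identifying $n$-periodic points with $\wh\phi^n$-\textbf{f}-points.
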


Let us remark the number of $\wh\phi^n$-\textbf{f}-points 
was denoted in \cite{FelTroRJMP2018} by $\mathbf{F}(\wh\phi^n)$, but here we denote
it more conceptually by $RT(\phi^n)$.
We refer to \cite{FelTroRJMP2018} for proofs and
details.

Once we have identified
the coefficients of representation theory zeta functions 
with the
numbers of periodic points of a dynamical system, 
the standard argument with the
M\"obius inversion formula
(see e.g. \cite[p.~104]{FelshB}, \cite{FelTroRJMP2018}) 
gives the following statement.

\begin{teo}\label{teo:congrue_rep_reide}
Suppose, $RT(\phi^n)<\infty$ for any $n$.
Then we have the following Gauss congruences
for representation theory Reidemeister numbers:
 $$
 \sum_{d\mid n} \mu(d)\cdot RT(\phi^{n/d}) \equiv 0 \mod n
 $$
for any $n$.

A similar statement is true for $RT^f(\phi^n)$ and 
$RT^{ff}(\phi^n)$.
\end{teo}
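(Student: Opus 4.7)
The plan is to reduce the congruences to the classical Gauss congruences for periodic-point counts of a dynamical system, and then invoke the infrastructure the paper has just built.

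First I would verify that $RT(\phi^n) = |\Fix(\widehat\phi^n)|$ with $\widehat\phi$ restricted to the dynamical part $\wh\G^\phi$. By definition $RT(\phi^n)$ counts $[\rho]\in\wh\G$ with $\rho\sim\rho\circ\phi^n$. By Lemma \ref{lem:keylem} any such class automatically lies in $\wh\G^\phi$, so the count is unchanged if we restrict to $\wh\G^\phi$. By Corollary \ref{cor:periodicanddyn} the map $\widehat\phi$ genuinely defines a dynamical system on $\wh\G^\phi$, and its $n$-periodic points are precisely the $\widehat\phi^n$-\textbf{f}-points. Hence
\[
RT(\phi^n)=|\Fix(\widehat\phi^n|_{\wh\G^\phi})|.
\]

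Next I would run the standard Möbius inversion on periodic-orbit counts. Let $P(d)$ denote the number of points in $\wh\G^\phi$ whose primitive period under $\widehat\phi$ is exactly $d$. Every $n$-periodic point has primitive period dividing $n$, so
\[
RT(\phi^n)=\sum_{d\mid n} P(d).
\]
Möbius inversion gives
\[
\sum_{d\mid n}\mu(d)\, RT(\phi^{n/d})=P(n).
\]
Since $P(n)$ counts points, and these points partition into primitive $\widehat\phi$-orbits each of cardinality exactly $n$, we obtain $P(n)=n\cdot (\text{number of primitive }n\text{-orbits})$, which is divisible by $n$. This proves the congruence.

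For $RT^f$ and $RT^{ff}$ the argument is identical: one works inside $\wh\G^\phi_f$ and $\wh\G^\phi_{ff}$ respectively, using the same corollary (which provides the dynamical system on these subspaces too) and the same observation that any class satisfying $\rho\sim\rho\circ\phi^n$ automatically belongs to the dynamical part by Lemma \ref{lem:keylem}, applied within the appropriate subcategory of representations.

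I do not expect a real obstacle here; the only point that deserves care is the verification that a $\widehat\phi^n$-\textbf{f}-point is automatically $\phi$-irreducible (i.e., lies in $\wh\G^\phi$), so that restricting to $\wh\G^\phi$ does not lose any fixed points of $\widehat\phi^n$. This is exactly Lemma \ref{lem:keylem}, so the proof is essentially a direct application of the machinery already assembled.
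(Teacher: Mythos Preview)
Your proof is correct and follows exactly the approach the paper indicates: identify $RT(\phi^n)$ with the count of $n$-periodic points of the dynamical system $\widehat\phi$ on $\wh\G^\phi$ via Lemma~\ref{lem:keylem} and Corollary~\ref{cor:periodicanddyn}, and then run the standard M\"obius inversion argument (which the paper only cites, whereas you have spelled it out). There is nothing to add or correct.
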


Here the above \emph{M\"obius function} is defined as
$$
\mu(d) =
\left\{
\begin{array}{ll}
1 & {\rm if}\ d=1,  \\
(-1)^k & {\rm if}\ d\ {\rm is\ a\ product\ of}\ k\ {\rm distinct\ primes,}\\
0 & {\rm if}\ d\ {\rm is\ not\ square-free.}
\end{array}
\right.
$$

The following statement evidently follows from the 
definitions.

\begin{prop}\label{prop:tbftimplcoin}
Suppose, $\phi:\G\to\G$ is an endomorphism and $R(\phi)<\infty$.
If TBFT $($resp., TBFT$_f)$ is true for $\G$ and $\phi$,
then $R(\phi)=RT(\phi)$ $($resp, $R(\phi)=RT^f(\phi)=RT^{ff}(\phi))$.

If the suppositions keep for $\phi^n$, for any $n$, then
$R_\phi(z)=RT_\phi(z)$ $($resp., 
$R_\phi(z)=RT^f_\phi(z)=RT^{ff}_\phi(z))$.
\end{prop}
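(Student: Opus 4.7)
The plan is simply to unpack the definitions and invoke the cited theorems directly, since as the authors remark the statement is essentially a matching of notation. First, by Definition \ref{dfn:represreidnum} the number $RT(\phi)$ equals the cardinality of $\{[\rho] \in \widehat{\G} : \rho \sim \rho \circ \phi\}$, and this is precisely the quantity that TBFT equates with $R(\phi)$ under the standing hypothesis $R(\phi) < \infty$. So $R(\phi) = RT(\phi)$ follows at once from TBFT. In the same way, restricting the count to $\widehat{\G}_f$, TBFT$_f$ yields $R(\phi) = RT^f(\phi)$ directly from the definition of $RT^f(\phi)$.

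The chain $R(\phi) = RT^f(\phi) = RT^{ff}(\phi)$ requires one additional ingredient: the inclusion $\widehat{\G}_{ff} \subset \widehat{\G}_f$ gives $RT^{ff}(\phi) \le RT^f(\phi)$ for free, but the reverse inequality asks that every finite-dimensional irreducible unitary representation $\rho$ with $\rho \sim \rho \circ \phi$ is actually \emph{finite}. For this I would invoke Theorem \ref{teo:onlyfinite}, which in the compact setting promotes fixed finite-dimensional representations to finite ones, applied to $\G$ through a suitable compactification. This is really the only non-trivial step.

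For the zeta-function statement, the hypothesis that the suppositions hold for every $\phi^n$ means the coefficient-wise equalities $R(\phi^n) = RT(\phi^n)$ (resp.\ $R(\phi^n) = RT^f(\phi^n) = RT^{ff}(\phi^n)$) hold for every $n \ge 1$. The desired identities of formal power series then follow by termwise comparison inside the exponent in the defining formulas of Definition \ref{dfn:represreidzeta} and the analogous one for $R_\phi(z)$: since
$$
R_\phi(z) = \exp\!\left(\sum_{n=1}^\infty \frac{R(\phi^n)}{n} z^n\right), \qquad RT_\phi(z) = \exp\!\left(\sum_{n=1}^\infty \frac{RT(\phi^n)}{n} z^n\right),
$$
and the two exponents agree coefficient-by-coefficient in $z$, the series themselves coincide; the reasoning for the $f$ and $ff$ versions is identical.

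The main (and essentially only) obstacle is the promotion $RT^f(\phi) = RT^{ff}(\phi)$, which is not literally part of TBFT$_f$ as stated in the introduction and rests on the finiteness result Theorem \ref{teo:onlyfinite}; every other equality is a direct reading of the definitions of $RT(\phi)$, $RT^f(\phi)$, $RT^{ff}(\phi)$ and of the corresponding generating functions.
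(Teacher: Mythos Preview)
Your proof is correct and follows essentially the same approach as the paper: everything is immediate from the definitions except the equality $RT^f(\phi)=RT^{ff}(\phi)$, which you correctly isolate as the only non-trivial point and trace back to Theorem~\ref{teo:onlyfinite} via a compactification. The paper's proof says exactly this, deferring the non-trivial step to the more packaged Theorem~\ref{teo:coinc_of_rei_num} (which itself rests on Theorem~\ref{teo:onlyfinite} together with the identifications $RT^f(\phi)=R_\cU(\phi)$ and $RT^{ff}(\phi)=R_\cP(\phi)$ and the estimate $R_\cU(\phi)\le R(\phi)<\infty$); your sketch of ``applied to $\G$ through a suitable compactification'' is precisely that route, so there is no substantive difference.
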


\begin{proof}
The only non-trivial fact is that $RT^f(\phi)=RT^{ff}(\phi)$
under our suppositions. We postpone this till a more general
consideration in Theorem \ref{teo:coinc_of_rei_num}.
\end{proof}

\begin{teo}\label{teo:rationalold}
Suppose, TBFT $($resp., TBFT$_f)$ is true for $\G$ and $\phi^n$;
and $R(\phi^n)<\infty$ for any $n$.
If $R_\phi(z)$ is rational, then 
$RT_\phi(z)$ $($resp., 
$RT^f_\phi(z)=RT^{ff}_\phi(z))$ is rational.

In particular, $RT^f_\phi(z)=RT^{ff}_\phi(z)$ is rational
in the following cases:

\begin{itemize}
\item [1.] $\G$ is a finitely generated abelian group;
\item [2.] $\G$ is a finitely generated torsion free
nilpotent group;
\item [3.] $\G$ is a crystallographic group 
with diagonal holonomy $\Z_2$ and $\phi$ is an automorphism.
\end{itemize}
\end{teo}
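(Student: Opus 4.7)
The plan is to reduce the first assertion to Proposition~\ref{prop:tbftimplcoin}. Under the standing hypotheses that TBFT (resp., TBFT$_f$) holds for every iterate $\phi^n$ and that $R(\phi^n)<\infty$ for all $n$, that proposition supplies the coefficient-wise equalities $R(\phi^n)=RT(\phi^n)$ (resp., $R(\phi^n)=RT^f(\phi^n)=RT^{ff}(\phi^n)$) for every $n\ge 1$. Since two formal power series with identical coefficients coincide, we obtain $R_\phi(z)=RT_\phi(z)$ (resp., $R_\phi(z)=RT^f_\phi(z)=RT^{ff}_\phi(z)$) as elements of $\mathbb{Q}[[z]]$, and rationality of $R_\phi(z)$ transfers immediately to the zeta function(s) on the right.

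For the three listed cases, I would verify separately that (a) TBFT$_f$ is available for every iterate $\phi^n$, and (b) $R_\phi(z)$ itself is already known to be rational. For~(a): finitely generated abelian groups and finitely generated torsion-free nilpotent groups are polycyclic, so TBFT$_f$ for all endomorphisms applies by~\cite{FelTroRJMP2018}; crystallographic groups are also polycyclic (virtually $\Z^n$), so TBFT$_f$ for automorphisms applies by~\cite{polyc}. Iterates of endomorphisms are endomorphisms, and iterates of automorphisms are automorphisms, so TBFT$_f$ propagates to every $\phi^n$ in each family. For~(b), rationality of $R_\phi(z)$ in cases~1 and~2 is recorded in the exposition~\cite{FelshB}, and in case~3 it is established in~\cite{DekimpeTerBus2017ArxRation}. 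Combining (a) and (b) with the first assertion yields the ``in particular'' statement.

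There is essentially no serious obstacle here: the entire argument is a bookkeeping exercise, and the only nontrivial ingredient absorbed into Proposition~\ref{prop:tbftimplcoin} is the equality $RT^f(\phi)=RT^{ff}(\phi)$, which the authors defer to Theorem~\ref{teo:coinc_of_rei_num}. The one point that must be checked with care is that, when applying the general principle to each family, TBFT$_f$ is needed simultaneously for \emph{all} iterates $\phi^n$ rather than just for $\phi$; this is automatic here because the results of~\cite{polyc,FelTroRJMP2018} are stated uniformly for arbitrary automorphisms (resp.\ endomorphisms) of polycyclic groups, and the rationality inputs from~\cite{FelshB,DekimpeTerBus2017ArxRation} come with the finiteness of $R(\phi^n)$ built in.
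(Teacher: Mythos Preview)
Your proof is correct and follows essentially the same route as the paper: the first part is an immediate consequence of Proposition~\ref{prop:tbftimplcoin}, and the three particular cases are handled by citing the known TBFT$_f$ results for (almost) polycyclic groups together with the known rationality of $R_\phi(z)$ from \cite{FelshB,FelHillForum,DekimpeTerBus2017ArxRation}. The only cosmetic difference is in the exact choice of references for TBFT$_f$ in the abelian case (the paper cites \cite{FelHill} directly, you subsume it under \cite{FelTroRJMP2018}); also note that crystallographic groups are in general only \emph{almost} polycyclic, though with holonomy $\Z_2$ they are indeed polycyclic, and in any case \cite{polyc} covers the almost polycyclic situation.
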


\begin{proof}
The first part follows immediately from Proposition \ref{prop:tbftimplcoin}.

In the second case we have a polycyclic group
and in the third case we have an almost polycyclic
group.
TBFT$_f$ for the cases under consideration is proved
in \cite{FelHill} (for endomorphisms of
abelian groups) (see also \cite{feltroKT}),
in \cite{polyc} (for automorphisms of almost polycyclic
groups) and in \cite{FelTroRJMP2018} (for endomorphisms of  polycyclic groups).

Rationality of $R_\phi(z)$ is proved in \cite{FelHillForum}
for the first and second cases (see \cite{FelshB}) 
and for the third case in \cite{DekimpeTerBus2017ArxRation}. 
\end{proof}

\begin{rk}\label{rk:lefshetzforabelian}
{\rm
In the same way with the help of Proposition 
\ref{prop:tbftimplcoin} one can extract from \cite{FelshB}
more information in the first case above. Namely,
in this case all irreducible representations are $1$-dimensional
and $RT_\phi(z)=RT^f_\phi(z)=RT^{ff}_\phi(z))$.
Let $\G_T$ be the characteristic torsion subgroup with
$\Z^n=\G/\G_T$. Denote by $\phi_T:\G_T\to\G_T$ and
by $\phi':\Z^n \to \Z^n$ the induced endomorphisms.
Then 
$$
RT_\phi(z)=L_{\hat{\phi}}(\sigma z)^{(-1)^r},
$$
where $L$ is the Lefschetz zeta function of 
$\widehat{\phi}:\widehat{\G}\to\widehat{\G}$, 
$\sigma=(-1)^p$ where $p$ is the number of real eigenvalues
of the linear operator on the Lie algebra of $\widehat{G}$,
corresponding to $\widehat{\phi'}$
such that $\lambda<-1$
and $r$ is the number of its real eigenvalues
$\lambda$
such that $|\lambda | > 1$ (see \cite[Theorems 28 and 29]{FelshB} for details).
}
\end{rk}

Since, by the definition, 
$RT_\phi(z)$ (resp, $RT^f_\phi(z)$, or $RT^{ff}_\phi(z)$)
is the Artin-Mazur zeta function of $\widehat{\phi}$ on
$\wh \G^\phi$ (resp, $\wh \G^\phi_f$, or $\wh{\G}^\phi_{ff}$),
we obtain immediately from (\ref{eq:baladi}) the following

\begin{prop}\label{prop:baladi}
$$
RT_\phi(z)=\prod_{\t\mathrm{\: primitive\: periodic\: orbit}}
\frac{1}{1-z^{p(\t)}}
$$
and similarly for $RT^f_\phi(z)$ and $RT^{ff}_\phi(z)$.
\end{prop}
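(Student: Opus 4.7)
The plan is essentially to recognize that the statement is a direct application of the Artin--Mazur product formula (\ref{eq:baladi}) to the dynamical systems provided by Corollary \ref{cor:periodicanddyn}.

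First I would note that, by Corollary \ref{cor:periodicanddyn}, the correspondence $\widehat\phi:[\rho]\mapsto [\rho\circ\phi]$ does define a genuine dynamical system on $\wh\G^\phi$ (and analogously on $\wh\G^\phi_f$ and $\wh\G^\phi_{ff}$), and the set of $n$-periodic points of this system is exactly the set of $\wh\phi^n$-\textbf{f}-points. Combined with Lemma \ref{lem:keylem}, any $[\rho]\in\wh\G$ satisfying $\rho\sim\rho\circ\phi^n$ automatically lies in $\wh\G^\phi$, so these $n$-periodic points coincide with all $[\rho]\in\wh\G$ such that $\rho\sim\rho\circ\phi^n$. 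Hence $RT(\phi^n)=|\Fix((\widehat\phi|_{\wh\G^\phi})^n)|$, and by the hypothesis of Definition \ref{dfn:represreidzeta} this is finite for every $n$.

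Next I would identify $RT_\phi(z)$ with the Artin--Mazur zeta function of $\widehat\phi$ acting on $\wh\G^\phi$: the defining series
$$
RT_\phi(z)=\exp\left(\sum_{n=1}^\infty \frac{RT(\phi^n)}{n}z^n\right)
$$
matches the definition of $AM_{\widehat\phi|_{\wh\G^\phi}}(z)$ term by term. Then I would invoke formula (\ref{eq:baladi}) verbatim, which was recalled from \cite{Baladi} as a formal identity obtained via the Taylor expansion of $\log(1-z^p)$, to rewrite this exponential as the formal Euler product over primitive periodic orbits of $\widehat\phi$ in $\wh\G^\phi$. This yields the stated formula.

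Finally, for $RT^f_\phi(z)$ and $RT^{ff}_\phi(z)$ the argument is identical: one replaces $\wh\G^\phi$ by $\wh\G^\phi_f$ or $\wh\G^\phi_{ff}$, uses that $\widehat\phi$ restricts to a dynamical system on each of these subspaces by Corollary \ref{cor:periodicanddyn}, and applies (\ref{eq:baladi}) again. There is no real obstacle here; the only point to be careful about is the bookkeeping that guarantees periodic points of $\widehat\phi$ of period $n$ are collected on the correct subspace, which is precisely what Lemma \ref{lem:keylem} and Corollary \ref{cor:periodicanddyn} were designed to provide.
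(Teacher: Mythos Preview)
Your proposal is correct and follows essentially the same approach as the paper: identify $RT_\phi(z)$ with the Artin--Mazur zeta function of $\widehat\phi$ on $\wh\G^\phi$ via Corollary~\ref{cor:periodicanddyn} (and Lemma~\ref{lem:keylem}), then apply (\ref{eq:baladi}) directly. The paper states this in a single sentence preceding the proposition, so your version is simply a more explicit unpacking of the same argument.
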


Now we shall prove rationality of zeta functions and
related facts under
restriction on $\widehat{\G}$ rather than on $ \G$ itself,
as above. 
More precisely, we will suppose that
$\wh \G^\phi$ (resp, $\wh \G^\phi_f$, or $\wh{\G}^\phi_{ff}$)
is finite. In this case 
$RT_\phi(z)$ (resp, $RT^f_\phi(z)$, or $RT^{ff}_\phi(z)$)
is the Artin-Mazur function on a finite set
and from Proposition \ref{prop:baladi} and (\ref{eq:fe_endo})
we obtain the following statement.

\begin{teo}\label{teo:onfinite}
Suppose that
$\wh \G^\phi$ (resp, $\wh \G^\phi_f$, or $\wh{\G}^\phi_{ff}$)
is finite. Then $RT_\phi(z)$ (resp, $RT^f_\phi(z)$, or $RT^{ff}_\phi(z))$
is rational and satisfies the following functional equation
$$
RT_\phi(1/z)= (-1)^a z^b RT_\phi(z),
$$
where $a$ is the number of primitive orbits and
$b$ is the number of periodic elements of $\widehat{\phi}$
on $\wh \G^\phi$. Similarly for 
$RT^f_\phi(z)$ and $RT^{ff}_\phi(z)$.
\end{teo}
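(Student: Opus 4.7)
The plan is to recognise $RT_\phi(z)$ as the Artin--Mazur zeta function of a genuine dynamical system on a finite set, and then invoke Proposition \ref{prop:baladi} and formula (\ref{eq:fe_endo}) essentially verbatim. No new ideas should be required beyond what is already assembled in Sections \ref{sec:prerem} and \ref{sec:dynrepzet}.

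First I would observe that the finiteness hypothesis $|\wh\G^\phi|<\infty$ already ensures that $RT_\phi(z)$ is well defined as a formal power series. Indeed, by Corollary \ref{cor:periodicanddyn}, $\wh\phi$ is a bona fide dynamical system on $\wh\G^\phi$, and the $\wh\phi^n$-\textbf{f}-points are precisely the $n$-periodic points of this system. Hence $RT(\phi^n)=|\Fix(\wh\phi^n|_{\wh\G^\phi})|\le |\wh\G^\phi|<\infty$ for every $n$, and the same bound applies uniformly.

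Next I would apply Proposition \ref{prop:baladi}, written as the Euler product
$$
RT_\phi(z)=\prod_{\t}\frac{1}{1-z^{p(\t)}},
$$
where $\t$ runs through the primitive periodic orbits of $\wh\phi|_{\wh\G^\phi}$. Since the union of such orbits is contained in $\wh\G^\phi$, there are only finitely many of them and each has length at most $|\wh\G^\phi|$. Therefore the product is a \emph{finite} product of rational functions in $z$, and $RT_\phi(z)$ is rational. The functional equation then follows by the same manipulation as in (\ref{eq:fe_endo}): substituting $1/z$ and factoring $(-z^{p(\t)})$ out of each denominator $1-z^{-p(\t)}$ gives
$$
RT_\phi(1/z)=\prod_\t(-z^{p(\t)})\cdot\prod_\t\frac{1}{1-z^{p(\t)}}=(-1)^{a}z^{b}RT_\phi(z),
$$
with $a$ the number of primitive orbits of $\wh\phi$ on $\wh\G^\phi$ and $b=\sum_\t p(\t)$ the total number of periodic elements. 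The arguments for $RT^f_\phi(z)$ and $RT^{ff}_\phi(z)$ are identical, with $\wh\G^\phi$ replaced throughout by $\wh\G^\phi_f$ or $\wh\G^\phi_{ff}$.

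Strictly speaking there is no serious obstacle in this proof; it is formal bookkeeping. The one point that must not be glossed over is that $\wh\phi$ genuinely restricts to a self-map of $\wh\G^\phi$, so that the Artin--Mazur formalism applies and the product in Proposition \ref{prop:baladi} indeed enumerates all periodic behaviour of $\wh\phi^n$ relevant to $RT(\phi^n)$. This restriction property is precisely Lemma \ref{lem:keylem} combined with the definition of $\wh\G^\phi$, and once it is cited the remainder of the argument is a one-line computation.
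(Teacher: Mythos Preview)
Your proof is correct and follows exactly the paper's route: the paper likewise notes that under the finiteness hypothesis $RT_\phi(z)$ is the Artin--Mazur zeta function of $\widehat{\phi}$ on a finite set, and then deduces rationality and the functional equation directly from Proposition \ref{prop:baladi} and formula (\ref{eq:fe_endo}). The only difference is that you spell out a few bookkeeping details (well-definedness, finiteness of the Euler product, the role of Lemma \ref{lem:keylem}) that the paper leaves implicit.
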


\begin{ex}\label{ex:finite}
{\rm
An evident example is a finite group.
}
\end{ex}

\begin{ex}\label{ex:osinetc}
{\rm
Less evident examples give some infinite groups
with finitely many conjugacy classes:
Osin group \cite{Osin}, Ivanov group  
and some HNN extensions  
described in \cite{polyc}.
Then $R(\Id)$ is finite. These groups have only one
finite-dimensional representation (the trivial one)
and thus enter conditions of Theorem \ref{teo:onfinite}
(see \cite{polyc} for necessary proofs).
}
\end{ex}

\section{Compactifications and Reidemeister numbers}\label{sec:compactreidnum}

\begin{dfn}\label{dfn:phiclassfunct}
{\rm
A $\phi$-\emph{class function} is a function being
constant on Reidemeister classes.
}\end{dfn}

\begin{lem}\label{lem:clopeniffin}
If $G$ is compact Hausdorff and $R(\f)<\infty$, then Reidemeister
classes are clopen (closed and open). In particular,
$\f$-class functions are continuous.
\end{lem}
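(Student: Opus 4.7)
My plan is to show closedness first, then derive openness from the finiteness hypothesis, and finally deduce continuity of $\f$-class functions.

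First, I would observe that for each $g\in G$, the Reidemeister class $\{g\}_\f = \{xg\f(x^{-1}):x\in G\}$ is the image of $G$ under the map $\tau_g:G\to G$ defined by $\tau_g(x)=xg\f(x^{-1})$. Since $G$ is a topological group and $\f$ is continuous (implicit in working with compact groups in the paper), the map $\tau_g$ is continuous. Therefore $\{g\}_\f=\tau_g(G)$ is the continuous image of a compact space, hence compact, and since $G$ is Hausdorff, compact subsets are closed. So every Reidemeister class is closed in $G$.

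Next, I would use the hypothesis $R(\f)<\infty$. The Reidemeister classes partition $G$ into finitely many pieces $C_1,\dots,C_N$, each of which is closed by the preceding step. For each $i$, the complement $G\setminus C_i=\bigcup_{j\ne i}C_j$ is a finite union of closed sets, hence closed. Thus $C_i$ is open. This shows each Reidemeister class is clopen.

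For the second assertion, let $f:G\to \C$ be a $\f$-class function, i.e.\ constant on each $C_i$ with value, say, $c_i$. Then $f$ takes only finitely many values and for any subset $S\subseteq\C$, the preimage $f^{-1}(S)$ is the union of those $C_i$ with $c_i\in S$, which is a finite union of clopen sets and hence clopen, in particular open. So $f$ is continuous.

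I do not anticipate a genuine obstacle: the only point that requires mild care is ensuring that twisted conjugation $\tau_g$ is continuous (which uses joint continuity of multiplication and continuity of inversion and of $\f$), and that a finite partition by closed sets forces each piece to be open — both standard. The argument does not even require $\f$ to be an automorphism; continuity is enough.
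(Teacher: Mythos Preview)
Your proof is correct and follows essentially the same approach as the paper: Reidemeister classes are continuous images of the compact group $G$ under the twisted-action map, hence compact, hence closed in the Hausdorff space $G$, and finiteness of the partition then forces each class to be open. The paper leaves the continuity of $\f$-class functions implicit, while you spell it out, but there is no substantive difference.
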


\begin{proof}
Reidemeister classes are orbits of the 
continuous twisted action
$g\mapsto xg\f(x^{-1})$ of $G$ on itself.
Thus, they are compact. Hence, closed, because $G$
is Hausdorff. The complement to each of them is
a finite union of closed sets. 
\end{proof}

\begin{dfn}\label{dfn:admcom}{\rm
A \emph{compactification} $\cC$ of a group $G$
is a couple consisting  of a compact Hausdorff
group $\cC(G)$ and a (continuous) homomorphism
$\alpha_\cC:G\to \cC(G)$ with dense image.
We say that $\cC$ is \emph{admissible for 
an endomorphism} $\phi:G\to G$ if 
\begin{itemize}
\item $\alpha_\cC$ has a $\phi$-invariant kernel (may be 
non-trivial);
\item the induced homomorphism $\cC(\phi):\cC(G)\to\cC(G)$ is continuous.
\end{itemize}
If $\cC$ is admissible for any endomorphism $\phi:G\to G$,
we say that it is \emph{admissible}.
The induced homomorphism  $\cC(\phi)$
is the \emph{compactification of} $\phi$.
Denote by $R_\cC(\phi)$ the corresponding 
Reidemeister numbers, i.e. $R_\cC(\phi)=R(\cC(\phi))$.

The corresponding \emph{compactification 
Reidemeister zeta function} is defined as
$$
R^\cC_\phi(z):=\exp \left(\sum_{n=1}^\infty \frac{R_\cC(\phi^n)}{n}z^n\right)
$$
supposing that all $R_\cC(\phi^n)$ are finite.
}
\end{dfn}

\begin{ex}{\rm
The main examples of admissible compactifications
come from
the \emph{profinite completion} $\cP$ and the
\emph{universal compactification} $\cU$ (see Lemma
\ref{lem:prof_and_uni_are_admiss}).
}\end{ex}

The \emph{universal compactification} $\cU(\G)$ is defined as
the closure of the image of the
diagonal homomorphism from $\G$ to the topological
(Tikhonoff) product of unitary groups being the ranges
of all (equivalence classes of) finite dimensional
irreducible unitary representations of $\G$. 
Denote by $\alpha_\cU$ the natural map $\alpha_\cU:\G\to \cU(\G)$. 
The pair $(\cU(\G),\alpha_\cU)$
enjoys the following \emph{universal property}:
for any homomorphism $\alpha':\G\to G$, where $G$ is some
compact group there exists a unique continuous
homomorphism $\gamma$ such that the diagram
\begin{equation}\label{eq:univproperty}
\xymatrix{
\G \ar[dr]_{\alpha'} \ar[r]^{\alpha_\cU } & \cU(\G) \ar[d]^{\gamma} \\
& G
}
\end{equation}
commutes
and is uniquely defined by it
(see, e.g. \cite[\S 16]{DixmierEng} for details).
 
The \emph{profinite completion} $\alpha_\cP:\G\to \cP(\G)$ 
is the natural homomorphism to 
the closure $\cP(\G)$
of the image of the diagonal homomorphism
to the Tikhonoff product of all finite quotients of $\G$.
It enjoys a universality property, similar to 
(\ref{eq:univproperty}), but with a \emph{profinite}
(i.e. Hausdorff, compact, and totally disconnected) 
group $G$ instead of a general compact group $G$ (see e.g. 
\cite{RibesZalesskiiBook,WilsonBook}
for details).
The profinite completion also can be defined in the
same way as the universal completion: namely we need
to take all finite representations instead of all
finite-dimensional representations.
The equivalence follows now from the universal property
of profinite completion and the decomposition of the
regular representation of a finite group.

\begin{lem}\label{lem:prof_and_uni_are_admiss}
The profinite completion $\cP$ and the
universal compactification $\cU$ are admissible
compactifications.
\end{lem}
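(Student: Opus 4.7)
The plan is to derive both admissibility conditions directly from the universal properties of $\cU$ and $\cP$. In each case I would first construct $\cC(\phi)$ via the appropriate universal mapping property, which automatically yields continuity, and then obtain $\phi$-invariance of the kernel as a formal consequence of the commuting diagram.

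For the universal compactification $\cU$, given an arbitrary endomorphism $\phi:\G\to\G$, I would feed the composed homomorphism $\alpha_\cU\circ\phi:\G\to\cU(\G)$ into diagram (\ref{eq:univproperty}) in place of $\alpha'$, with the target compact group being $\cU(\G)$ itself. The universal property then produces a unique continuous homomorphism $\gamma:\cU(\G)\to\cU(\G)$ satisfying $\gamma\circ\alpha_\cU=\alpha_\cU\circ\phi$. Setting $\cU(\phi):=\gamma$ gives the required induced homomorphism, and continuity is built in. The kernel condition is then immediate: for $g\in\ker\alpha_\cU$ one has $\alpha_\cU(\phi(g))=\cU(\phi)(\alpha_\cU(g))=\cU(\phi)(e)=e$, so $\phi(g)\in\ker\alpha_\cU$.

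For the profinite completion $\cP$, I would run exactly the same argument using the analogous universal property, in which the target must be a profinite group rather than a general compact Hausdorff group. Here the key observation is that $\cP(\G)$ is itself profinite, being a closed subgroup of a Tikhonoff product of finite (hence profinite) groups, so it is a legitimate target. Applying the profinite universal property to $\alpha_\cP\circ\phi:\G\to\cP(\G)$ produces a unique continuous $\cP(\phi):\cP(\G)\to\cP(\G)$ with $\cP(\phi)\circ\alpha_\cP=\alpha_\cP\circ\phi$, and the same one-line calculation shows that $\ker\alpha_\cP$ is $\phi$-invariant.

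The only point requiring care is that the target used in each universal property lies in the right class: compact Hausdorff for $\cU$, profinite for $\cP$. The first is trivial; the second is the standard fact just mentioned. Once these are noted, the proof is entirely formal — there is no analytic content and no genuine obstacle, which is exactly what one expects for a statement that simply records functoriality of the two standard compactifications.
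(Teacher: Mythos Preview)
Your proposal is correct and follows essentially the same approach as the paper: both argue that the universal property applied to $\alpha_\cC\circ\phi$ yields the continuous induced map $\cC(\phi)$ making the square commute, from which the two admissibility conditions follow. Your write-up is simply more explicit---spelling out the kernel-invariance calculation and noting that $\cP(\G)$ is itself profinite---whereas the paper just records the commuting square and says both properties hold if and only if such a $\phi'$ exists.
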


\begin{proof}
This follows immediately from the universal properties.
Indeed, a continuous homomorphism
$\phi'$ making the diagram
$$
\xymatrix{
\G \ar[d]_\phi \ar[r]^{\alpha_\cU}  & \cU(\G) \ar[d]^{\phi'} \\
\G \ar[r]_{\alpha_\cU} &  \cU(\G)
}
$$
commutative can exist if and only if both properties
in the definition of admissibility are fulfilled.
Similarly, for the profinite completion.
\end{proof}

\begin{lem}\label{lem:firstestim}
For any admissible $\cC$, we have 
$R_\cC (\phi) \le R(\phi)$.
\end{lem}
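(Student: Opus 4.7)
The plan is to reduce to the quotient $G/N$ where $N=\ker\alpha_\cC$, apply Lemma~\ref{lem:epimreid} there, and then push through to $\cC(G)$ via a compactness/density argument. Throughout, I may assume $R(\phi)<\infty$, since otherwise the inequality is vacuous.

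First, by the admissibility hypothesis, $N:=\ker\alpha_\cC$ is a $\phi$-invariant normal subgroup of $G$, so Lemma~\ref{lem:epimreid} gives a well-defined induced endomorphism $\bar\phi\colon G/N\to G/N$ whose Reidemeister classes are precisely the images of those of $\phi$. In particular $R(\bar\phi)\le R(\phi)<\infty$. The map $\alpha_\cC$ factors as an injective continuous homomorphism $\beta\colon G/N\hookrightarrow\cC(G)$ with dense image, and by construction $\cC(\phi)\circ\beta=\beta\circ\bar\phi$.

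Next, $\beta$ sends twisted conjugacy classes into twisted conjugacy classes: if $y=x\cdot z\cdot\bar\phi(x^{-1})$ in $G/N$, then $\beta(y)=\beta(x)\cdot\beta(z)\cdot\cC(\phi)(\beta(x)^{-1})$. This yields a well-defined map
$$
\beta_\ast\colon\mathcal{R}(\bar\phi)\longrightarrow\mathcal{R}(\cC(\phi)).
$$
The key observation is that every Reidemeister class of $\cC(\phi)$ in $\cC(G)$ is compact: it is the image of $\cC(G)$ under the continuous map $h\mapsto h\cdot y\cdot\cC(\phi)(h^{-1})$, and $\cC(G)$ is compact. Since $R(\bar\phi)<\infty$, the union of the $R(\bar\phi)$ classes of $\cC(\phi)$ that meet $\beta(G/N)$ is a finite union of compact (hence closed) sets, and it contains the dense set $\beta(G/N)$. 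Therefore this union equals all of $\cC(G)$, which forces every Reidemeister class of $\cC(\phi)$ to intersect $\beta(G/N)$, i.e.\ $\beta_\ast$ is surjective. Hence
$$
R_\cC(\phi)=R(\cC(\phi))\le R(\bar\phi)\le R(\phi).
$$

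The main subtlety is the passage from $G/N$ to $\cC(G)$: a priori a Reidemeister class in $\cC(G)$ need not meet the dense image $\beta(G/N)$, so one cannot expect $\beta_\ast$ to be surjective in general. The finiteness of $R(\bar\phi)$ (which is the payoff from first quotienting by $N$ and applying Lemma~\ref{lem:epimreid}) combined with compactness of individual classes is what lets one conclude that the finitely many classes hit by $\beta$ already cover all of $\cC(G)$; this is the step I would flag as the pivot of the argument.
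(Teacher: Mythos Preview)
Your proof is correct and follows essentially the same approach as the paper's: apply Lemma~\ref{lem:epimreid} to the image (equivalently, the quotient by $N=\ker\alpha_\cC$), then use compactness of the $\cC(\phi)$-classes together with density of the image to see that the finitely many classes meeting $\alpha_\cC(G)$ already exhaust $\cC(G)$. If anything, your write-up is more explicit than the paper's about the finiteness assumption and about why density plus compactness forces surjectivity of $\beta_\ast$.
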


\begin{proof}
Consider the image $\alpha_\cC(\G)$ of the compactification
homomorphism $\alpha_\cC:\G\to\cC(\G)$.
Then $\alpha_\cC:\G\to\alpha_\cC(\G)$ is an epimorphism, and
Lemma \ref{lem:epimreid} implies $R(\phi)\ge R(\cC(\phi)|_{\alpha_\cC(\G)})$. 
Evidently, any Reidemeister class in $\alpha_\cC(\G)$ is inside some class of $\cC(\phi)$, 
which is a compact
set being an orbit of the twisted action of the compact group
$\cC(\G)$.
Hence, 
the closure of a class in $\alpha_\cC(\G)$ still is inside a class
 of $\cC(\phi)$. On the other hand, the density of $\alpha_\cC(\G)$
 implies that each class of $\cC(\G)$ contain a class of
$\alpha_\cC(\G)$.
\end{proof}

From the universal property (\ref{eq:univproperty})
we obtain an epimorphism $\cU(\G)\to \cP(\G)$ 
and Lemma \ref{lem:epimreid} immediately implies the following
statement.

\begin{lem}\label{lem:plessu}
$R_\cU(\phi)\ge R_\cP(\phi)$.
\end{lem}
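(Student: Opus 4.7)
The plan is to produce a continuous surjective homomorphism $\gamma:\cU(\G)\to \cP(\G)$ intertwining $\cU(\phi)$ and $\cP(\phi)$, and then to apply Lemma \ref{lem:epimreid} to conclude. Once such a $\gamma$ is in hand, its kernel is a closed normal $\cU(\phi)$-invariant subgroup of $\cU(\G)$, the induced quotient is $\cP(\G)$ with induced endomorphism $\cP(\phi)$, and the lemma gives $R(\cU(\phi))\ge R(\cP(\phi))$.

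First I would construct $\gamma$. Apply the universal property \eqref{eq:univproperty} with $\alpha'=\alpha_\cP:\G\to\cP(\G)$, legitimate because $\cP(\G)$ is compact Hausdorff. This yields a unique continuous homomorphism $\gamma:\cU(\G)\to \cP(\G)$ with $\gamma\circ\alpha_\cU=\alpha_\cP$. Next I would verify surjectivity: since $\alpha_\cP(\G)$ is dense in $\cP(\G)$, so is $\gamma(\cU(\G))$; since $\cU(\G)$ is compact and $\gamma$ continuous, the image $\gamma(\cU(\G))$ is compact, hence closed in the Hausdorff space $\cP(\G)$, and therefore equals $\cP(\G)$.

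Then I would check that $\gamma$ intertwines the compactifications of $\phi$. The two continuous homomorphisms $\gamma\circ\cU(\phi)$ and $\cP(\phi)\circ\gamma$ from $\cU(\G)$ to $\cP(\G)$, when precomposed with $\alpha_\cU$, both equal $\alpha_\cP\circ\phi$ (using $\cU(\phi)\circ\alpha_\cU=\alpha_\cU\circ\phi$, $\cP(\phi)\circ\alpha_\cP=\alpha_\cP\circ\phi$, and $\gamma\circ\alpha_\cU=\alpha_\cP$). By the uniqueness clause of the universal property applied to $\alpha'=\alpha_\cP\circ\phi$, the two homomorphisms coincide on all of $\cU(\G)$.

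Finally, since $\gamma$ is a continuous surjective homomorphism of compact groups intertwining $\cU(\phi)$ with $\cP(\phi)$, Lemma \ref{lem:epimreid} applied to the normal subgroup $N:=\ker\gamma$ shows that $\gamma$ carries Reidemeister classes of $\cU(\phi)$ onto Reidemeister classes of $\cP(\phi)$, giving the desired inequality. The only step that requires any care is the intertwining identity, which is immediate from uniqueness in the universal property; no genuine obstacle arises.
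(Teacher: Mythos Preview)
Your proof is correct and follows precisely the route taken in the paper: obtain an epimorphism $\cU(\G)\to\cP(\G)$ from the universal property and invoke Lemma~\ref{lem:epimreid}. You merely spell out in detail the surjectivity and the intertwining identity $\gamma\circ\cU(\phi)=\cP(\phi)\circ\gamma$ (hence $\cU(\phi)$-invariance of $\ker\gamma$), which the paper leaves implicit.
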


Now we need to generalize Theorem 4.3 and Lemma 5.1 
from \cite{FelTroRJMP2018}
from finite to compact groups in the following way.

\begin{teo}[cf. \cite{FelHill}]\label{teo:peterweylforendcomp}
Let $\f:G\to G$ be an endomorphism of a compact 
Hausdorff group $G$.
Suppose, the Reidemeister number $R(\f)<\infty$.
Then  $R(\f)$ 
coincides with
the number of $\wh\f$-{\rm\textbf{f}}-points on $\wh G$.
\end{teo}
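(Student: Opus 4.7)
The plan is to adapt the Peter--Weyl / Schur argument from the finite case \cite{FelHill} by working in $L^2(G)$ with normalized Haar measure, and using Lemma \ref{lem:clopeniffin} to certify that the resulting finite-dimensional space of $\f$-class functions actually has dimension $R(\f)$.

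First I would invoke Lemma \ref{lem:clopeniffin}: since $R(\f)<\infty$ and $G$ is compact Hausdorff, the Reidemeister classes $\{g\}_\f$ are clopen, so their characteristic functions $\chi_{\{g\}_\f}$ are continuous and lie in $L^2(G)$. Consequently the space $\mathcal{F}_\f\subset L^2(G)$ of $\f$-class functions (i.e.\ $L^2$-functions that are constant on Reidemeister classes) is exactly spanned by these characteristic functions, and so $\dim \mathcal{F}_\f = R(\f)$.

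Next I would rewrite membership in $\mathcal{F}_\f$ as invariance under the unitary twisted action $T_x f(g):=f(x^{-1}g\f(x))$ of $G$ on $L^2(G)$; this is a strongly continuous unitary representation because $\f$ is continuous and Haar measure is invariant on the left and right (the twisted orbits being the Reidemeister classes). Then I would apply Peter--Weyl: every irreducible unitary representation $\rho$ of the compact group $G$ is finite-dimensional (so $\wh G=\wh G_f$), and
$$
L^2(G)=\wh{\bigoplus}_{[\rho]\in\wh G}\, V_\rho\otimes V_\rho^*,
$$
with the $\rho$-block identified with the matrix coefficients of $\rho$. A direct computation on matrix coefficients $c_{v,w}(g)=\langle \rho(g)v,w\rangle$ gives
$$
(T_x c_{v,w})(g)=\langle \rho(g)\rho(\f(x))v,\rho(x)w\rangle,
$$
so the twisted action preserves each $\rho$-block and there acts as $(\rho\circ\f)\otimes\overline{\rho}$.

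Finally, Schur's lemma identifies the $T$-fixed subspace of $V_\rho\otimes V_\rho^*$ with $\Hom_G(V_\rho,V_{\rho\circ\f})$, which is one-dimensional when $\rho\sim\rho\circ\f$ and zero otherwise. Summing over $\wh G$ and equating with $\dim\mathcal{F}_\f=R(\f)$ yields the claim. The point that needs care --- and is where I expect the only real friction --- is justifying that $L^2$-invariants under the twisted $G$-action commute with the Peter--Weyl direct sum (i.e.\ that passing to invariants really is computed block by block), and that finiteness of $R(\f)$ forces only finitely many $[\rho]$ to contribute nontrivially, so the infinite Hilbert sum decomposition causes no trouble; both facts follow from orthogonality of the isotypic blocks under any unitary $G$-action and from $\dim\mathcal{F}_\f<\infty$, but they should be stated explicitly.
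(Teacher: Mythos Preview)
Your proof is correct and follows essentially the same Peter--Weyl/Schur route as the paper; the only cosmetic difference is that you work in $L^2(G)$ with its Hilbert direct sum, whereas the paper works in $C^*(G)\cong\bigoplus_{[\rho]\in\wh G}\End V_\rho$ (sup norm) and phrases the twisted action as $a\mapsto g\,a\,\f(g^{-1})$ on that algebra. One small point worth making explicit (the paper does): since $\f$ is only an endomorphism, $\rho\circ\f$ is not a priori irreducible, so Schur's lemma does not apply directly to $\Hom_G(V_\rho,V_{\rho\circ\f})$; instead, a nonzero intertwiner exhibits $\rho$ as a subrepresentation of $\rho\circ\f$, and the equality $\dim V_\rho=\dim V_{\rho\circ\f}$ then forces $\rho\sim\rho\circ\f$, after which the intertwiner space is one-dimensional.
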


\begin{proof}
Let us note that $R(\f)$ is equal to the dimension of
the space of $\f$-class functions (i.e. those functions
that are constant on Reidemeister classes). 
By Lemma \ref{lem:clopeniffin} these functions
are continuous. 
Thus, they can be described as fixed elements of the action
$a\mapsto g a \f(g^{-1})$ on the group algebra $C^*(G)$.
For the latter algebra we have the Peter-Weyl decomposition
$$
C^*(G)\cong \bigoplus_{[\rho]\in \wh G} \End V_\rho,\qquad
\rho:G\to U(V_\rho),
$$
which respects the left and right $G$-actions and the right-hand
side is equipped with the sup-norm. Hence,
$$
R(\f)=\sum\limits_{[\rho]\in \wh G} \dim T_\rho,
\qquad T_\rho:=\{a\in \End V_\rho\: |\: a=\rho(g) a \rho(\f(g^{-1})
\mbox{ for all } g\in G\},
$$
where we allow at this stage infinitely many non-zero summands.
Thus, if $0\ne a \in T_\rho$, then
$a$ is an intertwining operator between the irreducible
representation $\rho$ and some representation $\rho\circ\f$.
This implies that $\rho$ is equivalent to some 
(irreducible) subrepresentation $\pi$ of $\rho\circ\f$
(cf. \cite[VI, p.57]{NaimarkStern}). Hence, 
$\dim \rho=\dim \pi$, while $\dim \rho = \dim \rho\circ\f$.
Thus, $\pi=\rho\circ\f$, and is irreducible. 
In this situation $\dim T_\rho=1$ by the Schur lemma.
Evidently, vice versa, if $\rho\sim \rho\circ \f$ then 
$\dim T_\rho =1$.
Hence,
$$
R(\f)=\sum\limits_{[\rho]\in \wh G} \left\{
\begin{array}{l}
1,\mbox{ if }\rho\sim \rho\circ \f \\
0,\mbox{ if }\rho\not\sim \rho\circ \f
\end{array}
\right.
= \mbox{ number of }\wh\f\mbox{-\textbf{f}-points}.
$$
\end{proof}

\begin{lem}\label{lem:funczional}
Let $\rho$ be a (finite-dimensional) 
irreducible representation of a compact group $G$. 
It is a $\wh\f$-\textbf{f}-point 
of an endomorphism $\f:G\to G$, if and only if
there exists a non-zero $\f$ class function
being a matrix coefficient of $\rho$.

In this situation this function is unique
up to scaling and is defined by the formula
\begin{equation}\label{eq:formulaclassfun}
T_{S,\rho}:g\mapsto \Tr (S\circ \rho(g)),
\end{equation}
where $S$ is an intertwining operator between
$\rho$ and $\rho\circ \f$:
$$
\rho(\f(x)) S= S \rho(x)\quad\mbox{ for any }x\in G.
$$

In particular, 
TBFT is true for $\f$ if and only 
if the above matrix coefficients form a base of
the space of $\f$-class functions.
\end{lem}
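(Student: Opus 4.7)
The plan is to identify matrix coefficients of $\rho$ with trace functionals and then translate the $\f$-class condition into the intertwining equation for the operator $S$.

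First I would recall that every matrix coefficient of $\rho$ has the form $T_{A,\rho}(g)=\Tr(A\rho(g))$ for a unique $A\in\End V_\rho$, the map $A\mapsto T_{A,\rho}$ being injective by the Peter-Weyl orthogonality (linear independence of matrix coefficients of a given irreducible). Next I would compute directly, using the cyclicity of trace,
$$
T_{A,\rho}(xg\f(x^{-1}))=\Tr\bigl(A\rho(x)\rho(g)\rho(\f(x^{-1}))\bigr)=\Tr\bigl(\rho(\f(x^{-1}))A\rho(x)\,\rho(g)\bigr).
$$
This equals $T_{A,\rho}(g)=\Tr(A\rho(g))$ for all $g$ iff $\rho(\f(x^{-1}))A\rho(x)=A$ for every $x$ (by injectivity of $A\mapsto T_{A,\rho}$), i.e.\ iff $A\rho(x)=\rho(\f(x))A$ for all $x\in G$, i.e.\ iff $A$ intertwines $\rho$ with $\rho\circ\f$.

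Then I would argue that a nonzero intertwiner $A$ forces $\rho$ to be a $\wh\f$-\textbf{f}-point. Indeed, $\Ker A$ is $\rho$-invariant, and by irreducibility of $\rho$ it is either $V_\rho$ (so $A=0$) or $0$; likewise $\Im A$ is $\rho\circ\f$-invariant. Since $V_\rho$ is finite-dimensional, $A$ is an isomorphism, which gives $\rho\sim\rho\circ\f$ (and as a by-product $\rho\circ\f$ is irreducible). Conversely, if $\rho\sim\rho\circ\f$, any equivalence supplies a nonzero $S$ satisfying~(\ref{eq:formulaclassfun}). Uniqueness up to scalar follows from Schur's lemma applied to the 1-dimensional space $\Hom_G(\rho,\rho\circ\f)$, which is 1-dimensional precisely because $\rho$ and $\rho\circ\f$ are equivalent irreducibles.

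For the final ``in particular'' part, I would recall from Lemma \ref{lem:clopeniffin} that $\f$-class functions on $G$ are continuous and that the dimension of the space of $\f$-class functions equals $R(\f)$ (a basis being indicator functions of the finitely many clopen Reidemeister classes). Each $T_{S,\rho}$ attached to a $\wh\f$-\textbf{f}-point $\rho$ lies in this space, and for distinct irreducibles the $T_{S,\rho}$ are linearly independent by Peter-Weyl orthogonality of matrix coefficients from inequivalent representations. Hence they form a basis of the space of $\f$-class functions iff their number---i.e.\ the number of $\wh\f$-\textbf{f}-points---equals $R(\f)$, which is precisely the content of TBFT for $\f$. The main subtlety to watch is that $\rho\circ\f$ is \emph{a priori} not assumed irreducible, so one must extract its irreducibility from the existence of a nonzero intertwiner (just as in the proof of Theorem~\ref{teo:peterweylforendcomp}); once this is handled, Schur's lemma applies and the rest is bookkeeping.
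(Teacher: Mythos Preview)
Your argument is correct and follows essentially the same route as the paper's proof: identify matrix coefficients with functionals $g\mapsto\Tr(A\rho(g))$, use cyclicity of the trace to rewrite the $\f$-class condition as the intertwining equation $\rho(\f(x))A=A\rho(x)$, invoke Schur's lemma for uniqueness, and appeal to linear independence of matrix coefficients of inequivalent irreducibles for the final statement. The paper phrases the injectivity step as ``$\rho(a)$ runs over the entire matrix algebra'' rather than citing Peter--Weyl orthogonality, and it handles the non-vanishing of $T_{S,\rho}$ by taking $a$ with $\rho(a)=S^*$, but these are cosmetic differences; your explicit kernel/image argument for why a nonzero intertwiner forces $\rho\sim\rho\circ\f$ (and hence irreducibility of $\rho\circ\f$) is a welcome clarification that the paper leaves implicit here, having established it in the proof of Theorem~\ref{teo:peterweylforendcomp}.
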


\begin{proof}
First, let us note that (\ref{eq:formulaclassfun})
defines a class function:
$$
T_{S,\rho}(x g \f(x^{-1}))= 
\Tr (S \rho(x g \f(x^{-1})))=
\Tr (\rho(\f(x)) S \rho(g) \rho(\f(x^{-1}))=\Tr (S\rho(g)).
$$
If $S\ne 0$, then $\rho(a)=S^*$ for some $a\in C^*(G)$,
and $\Tr(SS^*) \neq 0$. Thus, the $\f$-class function is non-zero.
On the other hand, any non-trivial
matrix coefficient of $\rho$, i.e.
a functional $T:\End(V_\rho)\to\C$, has the form
$g\mapsto \Tr(D\rho(g))$ for some fixed matrix $D\ne 0$.
If it is a $\f$-class function, then for any $g\in G$,
or similarly, $a\in C^*(G)$,
$$
\Tr(D\rho(a))=\Tr(D \rho(x a \f(x^{-1})))=
\Tr(\rho(\f(x^{-1})) D \rho(x) \rho(a) ).
$$
Since $\rho(a)$ runs over the entire matrix algebra,
this implies 
$$
D=\rho(\f(x^{-1})) D \rho(x),\qquad \mbox{or}\qquad
\rho(\f(x)) D= D \rho(x),
$$
 i.e., $D$ is the desired
non-zero intertwining operator.

The uniqueness up to scaling follows now from 
the explicit formula and the Shur lemma.

The last statement follows from linear
independence of matrix coefficients of
non-equivalent representations.
\end{proof}

Combining these two results one obtains
\begin{cor}\label{cor:tbftcomp}
TBFT is true for endomorphisms of compact Hausdorff groups.
\end{cor}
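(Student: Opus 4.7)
The plan is simply to observe that the work has already been done in Theorem~\ref{teo:peterweylforendcomp} and Lemma~\ref{lem:funczional}, so the corollary is a packaging statement. By the definition of TBFT in the introduction, what must be proved is: for an endomorphism $\f$ of a compact Hausdorff group $G$ with $R(\f)<\infty$, the Reidemeister number $R(\f)$ equals the number of $\wh\f$-\textbf{f}-points of $\wh G$. Theorem~\ref{teo:peterweylforendcomp} is exactly that statement, so in one line the corollary follows.

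To make the combination with Lemma~\ref{lem:funczional} explicit (and to get the basis form of the assertion), I would reassemble the argument as follows. By Lemma~\ref{lem:clopeniffin} the Reidemeister classes are clopen and finite in number, so the space $\cF$ of $\f$-class functions is continuous, finite-dimensional, with $\dim\cF=R(\f)$, spanned by indicator functions of classes. The space $\cF$ is the fixed-point set of the continuous twisted $G$-action $a\mapsto g\cdot a\cdot\f(g^{-1})$ on $C^*(G)$, hence inherits the Peter--Weyl decomposition
$$
\cF\cong\bigoplus_{[\rho]\in\wh G}\cF_\rho,\qquad
\cF_\rho\ss\End V_\rho.
$$
Lemma~\ref{lem:funczional} identifies each summand: $\cF_\rho$ is one-dimensional, spanned by $T_{S,\rho}$ for an intertwiner $S$, whenever $[\rho]$ is a $\wh\f$-\textbf{f}-point, and vanishes otherwise. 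Taking dimensions yields $R(\f)=\dim\cF=$ number of f-points, and the matrix coefficients $T_{S,\rho}$ form a basis of $\cF$, which is the stronger form of TBFT promised by the final clause of Lemma~\ref{lem:funczional}.

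The only potentially delicate point is the reduction to a finite Peter--Weyl sum: one must know that the twisted $G$-action preserves the algebraic direct sum $\bigoplus_{[\rho]}\End V_\rho$ and that, thanks to $R(\f)<\infty$, only finitely many $[\rho]$ contribute. Both facts are already handled inside the proof of Theorem~\ref{teo:peterweylforendcomp} (the former because left and right $G$-actions respect the Peter--Weyl decomposition, the latter because a non-zero $\rho$-component forces $\rho\sim\rho\circ\f$ via the Schur lemma). Beyond that verification, no further obstacle arises: the corollary is a direct consequence of the two preceding results.
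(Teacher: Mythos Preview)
Your proposal is correct and matches the paper's approach exactly: the paper's proof is nothing more than the line ``Combining these two results one obtains'', referring to Theorem~\ref{teo:peterweylforendcomp} and Lemma~\ref{lem:funczional}, and you have correctly identified that Theorem~\ref{teo:peterweylforendcomp} already \emph{is} TBFT for compact Hausdorff groups, with Lemma~\ref{lem:funczional} supplying the basis-of-$\f$-class-functions reformulation. Your elaboration of the Peter--Weyl bookkeeping is faithful to what happens inside those two proofs and adds no new idea.
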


As it was explained above, this implies
\begin{cor}\label{cor:congrcompactif}
If $R_\cC(\phi^n)<\infty$ for any $n$, they satisfy
the Gauss congruences.
\end{cor}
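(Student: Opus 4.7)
The plan is to transfer the problem from the discrete group $\G$ with the endomorphism $\phi$ to the compact Hausdorff group $\cC(\G)$ with the continuous endomorphism $\cC(\phi)$, where we can apply the representation-theoretic machinery of Section \ref{sec:dynrepzet} combined with the just-proved TBFT for compact groups.

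First, I would note that by admissibility $\cC(\phi^n) = \cC(\phi)^n$ is a continuous endomorphism of the compact Hausdorff group $\cC(\G)$ with $R(\cC(\phi)^n) = R_\cC(\phi^n) < \infty$ by hypothesis. By Corollary \ref{cor:tbftcomp} (that is, Theorem \ref{teo:peterweylforendcomp}), this finite number equals the number of $[\rho] \in \wh{\cC(\G)}$ such that $\rho \sim \rho \circ \cC(\phi)^n$; equivalently, $R_\cC(\phi^n) = RT(\cC(\phi)^n)$ with $RT$ computed on $\wh{\cC(\G)}$.

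Next, I would reuse the dynamical interpretation of Corollary \ref{cor:periodicanddyn}, observing that its proof in \cite{FelTroRJMP2018} (via Lemma \ref{lem:keylem}) depended only on the algebraic relation $\rho \sim \rho \circ \phi^n$ between irreducible unitary representations and so applies verbatim to the compact group $\cC(\G)$. Therefore, on the subspace $\wh{\cC(\G)}^{\cC(\phi)}$, the correspondence $[\rho] \mapsto [\rho \circ \cC(\phi)]$ defines a genuine dynamical system whose $n$-periodic points are exactly the $\wh{\cC(\phi)^n}$-\textbf{f}-points, hence are $R_\cC(\phi^n)$ in number.

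Finally, with the coefficients $R_\cC(\phi^n)$ identified as the numbers of $n$-periodic points of a single dynamical system, the Gauss congruences
$$
\sum_{d \mid n} \mu(d) \cdot R_\cC(\phi^{n/d}) \equiv 0 \pmod{n}
$$
follow by the same Möbius inversion / primitive-orbit counting argument used to establish Theorem \ref{teo:congrue_rep_reide}: every $n$-periodic point lies on a primitive periodic orbit of some length $p \mid n$, contributing $p$ to $RT(\cC(\phi)^n)$, so the inner sum collects complete orbits and the standard number-theoretic identity gives divisibility by $n$. The only real point that needs care — and the main (minor) obstacle — is verifying that Lemma \ref{lem:keylem} and Corollary \ref{cor:periodicanddyn} carry over without change to the compact setting; but since their proofs rest purely on irreducibility and equivalence of unitary representations, nothing in the compact case blocks the argument.
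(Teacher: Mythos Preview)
Your proposal is correct and follows essentially the same approach as the paper: the paper's proof is just the single phrase ``As it was explained above, this implies,'' meaning one transports the Reidemeister numbers $R_\cC(\phi^n)=R(\cC(\phi)^n)$ to counts of $\wh{\cC(\phi)}^n$-\textbf{f}-points via Theorem~\ref{teo:peterweylforendcomp}/Corollary~\ref{cor:tbftcomp} and then invokes the periodic-point/M\"obius argument of Theorem~\ref{teo:congrue_rep_reide}. Your write-up is simply a more explicit unpacking of that sentence, including the (valid) observation that Lemma~\ref{lem:keylem} and Corollary~\ref{cor:periodicanddyn} go through unchanged for compact $G$.
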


\section{Finite representations versus finite-dimensional ones}\label{sec:finfindim}
We will extend in this section a result from \cite{FelTroJGT}
to compact groups and endomorphisms.

\begin{teo}\label{teo:onlyfinite}
Suppose, $\phi:G\to G$ is an endomorphism of a compact
group $G$ and $R(\phi)<\infty$. If a matrix coefficient of
some finite-dimensional irreducible representation $\rho$ of $G$ is
a $\phi$-class function, then $\rho$ is finite.
\end{teo}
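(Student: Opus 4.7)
The plan is to bootstrap from the single $\f$-class matrix coefficient to \emph{every} matrix coefficient of $\rho$, showing that all of them are locally constant on $G$; this forces $\rho$ itself to be locally constant and hence to factor through a finite quotient of $G$.

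\emph{Step 1 (a single $\f$-class matrix coefficient is locally constant).} Since $R(\f)<\infty$, Lemma \ref{lem:clopeniffin} gives that the Reidemeister classes form a finite clopen partition of $G$. By Lemma \ref{lem:funczional}, the hypothesized nonzero matrix coefficient may be written as $f=T_{S,\rho}$ for some $0\ne S\in\End V_\rho$ intertwining $\rho$ and $\rho\circ\f$. Being constant on each (clopen) class, $f$ is continuous and locally constant on $G$.

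\emph{Step 2 (bi-translates span the whole matrix coefficient space $M_\rho$ of $\rho$).} A direct calculation yields
$$
(L_h R_k f)(g)=\Tr\bigl(S\,\rho(h^{-1}gk)\bigr)=\Tr\bigl(\rho(k)\,S\,\rho(h^{-1})\,\rho(g)\bigr)=T_{\rho(k)S\rho(h^{-1}),\,\rho}(g),
$$
so the linear span of the bi-translates $L_h R_k f$ equals $\{T_{A,\rho}:A\in J\}$ with
$$
J=\Span\{\rho(k)\,S\,\rho(h^{-1}):h,k\in G\}\subseteq \End V_\rho.
$$
By irreducibility of $\rho$ and the Burnside/Jacobson density theorem, $\Span\rho(G)=\End V_\rho$, so $J$ is the two-sided ideal of the simple algebra $\End V_\rho$ generated by $0\ne S$, and hence equals all of $\End V_\rho$. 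Therefore the bi-translates of $f$ span the full matrix coefficient space $M_\rho$ of $\rho$.

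\emph{Step 3 ($\rho$ is locally constant, hence finite).} Left and right translations are self-homeomorphisms of $G$, so every bi-translate of $f$ is locally constant; by Step 2 every element of $M_\rho$ is locally constant. Applied to the matrix units $A=E_{ji}$, this tells us that every coordinate $\rho_{ij}$ of $\rho$ is locally constant, hence so is the map $\rho:G\to U(V_\rho)$. Then $\ker\rho$ contains a neighborhood of $e$, is clopen in $G$, and thus has finite index in the compact group $G$. Consequently $\rho$ descends to a faithful representation of the finite group $G/\ker\rho$, so $\rho$ is finite.

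The main obstacle is Step 2: one needs a \emph{single} $\f$-class matrix coefficient to propagate local constancy to \emph{all} matrix coefficients of $\rho$, and this is precisely what irreducibility (via Burnside/Jacobson density) together with simplicity of $\End V_\rho$ is designed to deliver. Steps 1 and 3 are routine manipulations with clopen sets and open subgroups of compact groups.
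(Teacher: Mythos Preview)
Your argument is correct. The core idea matches the paper's: propagate the ``finite-valued/locally constant'' property from one matrix coefficient to enough of them to force $\rho$ through a finite quotient. The execution differs in two places. First, the paper uses only \emph{left} translates of $f_\rho$ and works inside the subrepresentation $W\subseteq V_\rho\otimes V_\rho^*$ they span (a sum of copies of $\rho$), whereas you use \emph{bi}-translates together with Burnside/Jacobson density to hit all of $M_\rho$ at once; your route is cleaner and avoids the detour through $W$. Second, for the endgame the paper produces a finite clopen partition of $G$ permuted by left translations and factors the action on $W$ through the symmetric group on that partition, while you argue directly that $\rho$ is locally constant, hence $\ker\rho$ is open, hence of finite index in the compact $G$. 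Both conclusions are equivalent, but the open-kernel argument is more direct. One minor remark: the appeal to Lemma~\ref{lem:funczional} in Step~1 is unnecessary --- any nonzero matrix coefficient has the form $g\mapsto\Tr(D\rho(g))$ with $D\ne 0$, and your Step~2 never uses that $D$ intertwines $\rho$ and $\rho\circ\f$.
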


\begin{proof}
We give only a sketch of the proof, because it is more or
less the same as in \cite{FelTroJGT}.
Suppose, $f_\r$ is a non-trivial matrix coefficient. Hence,
its left translations, being once again matrix coefficients
of $\r$, generate a translation invariant subspace $W$ of 
the finite-dimensional space $V_\r\otimes V_\r^*$.
Hence, $W$ is a space of a finite-dimensional representation, which is 
isomorphic to a direct sum of several 
copies of $\r$ (see, e.g. 
\cite[Ch. IV]{NaimarkStern}). The space $W$
has a basis $L_{g_1} f_\r,\dots,L_{g_k} f_\r$.
Thus, all functions from $W$ take only finitely many
values (with level sets of the form $\cap_i g_i U_j$, where
$U_j$ are the level sets of $f_\r$). 
Thus, there exists a finite partition
$G=V_1\sqcup\dots\sqcup V_m$ such that elements of $W$ are
constant on the elements of the partition and for each
pair $V_i \ne V_j$ there exists a function from $W$ taking distinct
values on them. Note that these sets $V_i$ are closed and open,
because matrix coefficients are continuous. 
Hence, left translations map $V_i$ onto
each other.  This means that the representation $G$ on $W$ factorizes
through (a subgroup of) the permutation group on $m$
elements, i.e. a finite group. The same is true for its
subrepresentation $\r$, thus it is finite by definition.
\end{proof}

The following statement was known for automorphisms
of locally connected compact groups (see \cite{feltroKT}),
while here our main subject is the class of totally disconnected
groups.

\begin{cor}\label{cor:calccomp}
If $\f:G\to G$ is an endomorphism of a compact group $G$
with $R(\f)<\infty$, then there exists a closed normal
$\f$-invariant
subgroup of finite index $G_0$ such that the epimorphism
$$
p: G\to G/G_0=:F
$$
onto a (finite) group $F$ gives a bijection of Reidemeister
classes.
\end{cor}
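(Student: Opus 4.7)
The plan is to build $G_0$ as the common kernel of the finitely many irreducible representations of $G$ that are fixed by $\wh\f$, and then to invoke Theorem \ref{teo:peterweylforendcomp} on both sides of the quotient map to promote the surjection of Reidemeister classes from Lemma \ref{lem:epimreid} into a bijection.

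First I would list, by Theorem \ref{teo:peterweylforendcomp}, the $\wh\f$-\textbf{f}-points $\rho_1,\dots,\rho_k$ on $\wh G$; there are $k=R(\f)<\infty$ of them, and each carries, by Lemma \ref{lem:funczional}, a non-zero matrix coefficient which is an $\f$-class function. Theorem \ref{teo:onlyfinite} then forces every $\rho_i$ to be a \emph{finite} representation, so $N_i:=\ker\rho_i$ is a closed normal subgroup of finite index in $G$. I would set $G_0:=\bigcap_{i=1}^{k} N_i$, which is still closed, normal, and of finite index. For $\f$-invariance, note that equivalent representations share the same kernel, so $\rho_i\sim\rho_i\circ\f$ yields $N_i=\ker(\rho_i\circ\f)=\f^{-1}(N_i)$, whence $\f(N_i)\subseteq N_i$; intersecting over $i$ gives $\f(G_0)\subseteq G_0$, so $\f$ descends to an endomorphism $\bar\f$ of the finite group $F:=G/G_0$.

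Finally I would compare the Reidemeister numbers. Lemma \ref{lem:epimreid} already provides a surjection from Reidemeister classes of $\f$ onto those of $\bar\f$, so it suffices to prove $R(\bar\f)=R(\f)$. Applying Theorem \ref{teo:peterweylforendcomp} once more to $(F,\bar\f)$ identifies $R(\bar\f)$ with the number of $\wh{\bar\f}$-\textbf{f}-points on $\wh F$. By construction each $\rho_i$ factors through $F$ and supplies such a point, while conversely any irreducible representation of $F$ fixed by $\bar\f$ lifts (via $p$) to an irreducible representation of $G$ fixed by $\f$, i.e., to one of the $\rho_i$. Hence $R(\bar\f)=k=R(\f)$, and the surjection becomes a bijection. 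The main obstacle is the $\f$-invariance step: it depends on the sharp fact that $\rho_i$ and $\rho_i\circ\f$ have the \emph{same} kernel, not merely comparable ones, which in turn rests on the equivalence of the two representations rather than just on the existence of a non-zero intertwiner.
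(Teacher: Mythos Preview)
Your proposal is correct and follows essentially the same route as the paper: define $G_0$ as the intersection of the kernels of the finitely many $\wh\f$-fixed irreducible representations, which are finite by Theorem~\ref{teo:onlyfinite}, and then compare Reidemeister counts via Theorem~\ref{teo:peterweylforendcomp}. The paper compresses this into a single sentence citing Theorems~\ref{teo:peterweylforendcomp}, \ref{teo:onlyfinite} and Lemma~\ref{lem:funczional}; you have simply unpacked the $\f$-invariance of $G_0$ and the bijection argument explicitly, and your closing worry about kernels is unnecessary once one notes (as in the proof of Theorem~\ref{teo:peterweylforendcomp}) that $\rho_i\circ\f$ is again irreducible, so any nonzero intertwiner already forces equivalence.
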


\begin{proof}
The intersection of group kernels of all finite representations $\rho$
such that $\rho\sim\rho\circ\f$ can be taken as $G_0$ by
Theorem \ref{teo:peterweylforendcomp}, Lemma \ref{lem:funczional}, and
Theorem \ref{teo:onlyfinite}.
\end{proof}

\section{Comparing zeta functions}\label{sec:univversprof}

Now we will use Theorem \ref{teo:onlyfinite} to prove the
following statement.
\begin{teo}\label{teo:Rp=Ru=RT}
We have for an endomprphism $\phi:\Gamma\to\Gamma$,
\begin{itemize}
\item[1)] $R_\cU(\phi)=RT^f(\phi)$;
\item[2)] $R_\cU(\phi)=R_\cP(\phi)$ if $R_\cU(\phi)$ is finite;
\item[3)] $R_\cP(\phi)=RT^{ff}(\phi)$ if $R_\cP(\phi)$ is finite.
\end{itemize}
\end{teo}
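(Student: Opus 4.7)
The plan is to apply Theorem \ref{teo:peterweylforendcomp} to the compact Hausdorff groups $\cU(\G)$ and $\cP(\G)$, reducing each Reidemeister number to a count of $\wh{\cU(\phi)}$- or $\wh{\cP(\phi)}$-\textbf{f}-points in the respective unitary dual, and then identify these duals with $\wh{\G}_f$ and $\wh{\G}_{ff}$ through the universal properties. For part (1), I observe that by Peter-Weyl every $[\rho']\in\wh{\cU(\G)}$ is finite-dimensional; since $\alpha_\cU(\G)$ is dense, the pullback $[\rho']\mapsto[\rho'\circ\alpha_\cU]$ injects $\wh{\cU(\G)}$ into $\wh{\G}_f$, and the universal property (\ref{eq:univproperty}) supplies its inverse, so the map is a bijection. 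The relation $\cU(\phi)\circ\alpha_\cU=\alpha_\cU\circ\phi$ shows it intertwines the actions of $\wh{\cU(\phi)}$ and $\wh\phi$, hence matches $\wh{\cU(\phi)}$-\textbf{f}-points with the $[\rho]\in\wh{\G}_f$ satisfying $\rho\sim\rho\circ\phi$. Theorem \ref{teo:peterweylforendcomp} (or the Peter-Weyl decomposition argument in its proof, which allows infinitely many non-zero $T_\rho$) then yields $R_\cU(\phi)=RT^f(\phi)$.

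Part (3) is entirely analogous with $\cP$ in place of $\cU$: an irreducible representation of the profinite group $\cP(\G)$ is continuous and factors through a finite quotient, so it corresponds via pullback to a finite irreducible representation of $\G$, and conversely any finite representation of $\G$ factors through $\cP(\G)$ by the universal property of the profinite completion. Under the hypothesis $R_\cP(\phi)<\infty$, Theorem \ref{teo:peterweylforendcomp} applied to $\cP(\phi)$ yields $R_\cP(\phi)=RT^{ff}(\phi)$.

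For part (2), Lemma \ref{lem:plessu} already gives $R_\cU(\phi)\ge R_\cP(\phi)$; the substance is the reverse inequality. Here the crucial input is Theorem \ref{teo:onlyfinite}: since $\cU(\G)$ is compact and $R(\cU(\phi))=R_\cU(\phi)$ is finite by assumption, every $\wh{\cU(\phi)}$-\textbf{f}-point $[\rho']\in\wh{\cU(\G)}$ must be a \emph{finite} representation of $\cU(\G)$. Such a $\rho'$ factors through a finite quotient $\cU(\G)/K$, and the composition $\G\to\cU(\G)\to\cU(\G)/K$ is a finite representation of $\G$ which, by the universal property of profinite completion, factors through $\cP(\G)$. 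Hence each such $[\rho']$ descends to an irreducible representation of $\cP(\G)$ along the canonical epimorphism $\cU(\G)\to\cP(\G)$, and the dynamics are preserved because $\cP(\phi)$ is induced by the same functoriality. Theorem \ref{teo:peterweylforendcomp} on both sides then gives the equality $R_\cU(\phi)=R_\cP(\phi)$.

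The main obstacle I anticipate is checking that the pullback correspondences $\wh{\cU(\G)}\leftrightarrow\wh{\G}_f$ and $\wh{\cP(\G)}\leftrightarrow\wh{\G}_{ff}$ are genuine bijections and compatible with $\wh\phi$, $\wh{\cU(\phi)}$ and $\wh{\cP(\phi)}$: density of $\alpha_\cC(\G)$ gives injectivity (a continuous representation of $\cC(\G)$ is determined by its restriction to a dense subgroup) and the universal property gives surjectivity, but one must keep careful track of equivalence classes and the action of the endomorphism at each stage. A secondary subtlety is that part (1) is stated without a finiteness hypothesis; the Peter-Weyl argument of Theorem \ref{teo:peterweylforendcomp} still decomposes the space of $\cU(\phi)$-class functions as $\bigoplus_{[\rho']}T_{\rho'}$ with $\dim T_{\rho'}\in\{0,1\}$, so $R_\cU(\phi)$ and $RT^f(\phi)$ are simultaneously finite or infinite, and equal in both cases.
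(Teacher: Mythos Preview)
Your argument is correct and, for parts (1) and (2), follows essentially the same route as the paper: apply Theorem \ref{teo:peterweylforendcomp} to $\cU(\G)$, identify $\wh{\cU(\G)}$ with $\wh{\G}_f$ via the universal property, and for part (2) invoke Theorem \ref{teo:onlyfinite} to force the $\wh{\cU(\phi)}$-\textbf{f}-points down to finite representations, hence to $\cP(\G)$. Your explicit verification of the bijection $\wh{\cC(\G)}\leftrightarrow\wh{\G}_f$ (density for injectivity, universality for surjectivity, functoriality for $\wh\phi$-equivariance) fills in details the paper leaves implicit.

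The one genuine difference is in part (3). The paper again appeals to Theorem \ref{teo:onlyfinite}: under the hypothesis $R_\cP(\phi)<\infty$, the $\wh{\cP(\phi)}$-\textbf{f}-points in $\wh{\cP(\G)}$ are forced to be finite, and then one identifies finite representations of $\cP(\G)$ with $\wh{\G}_{ff}$. You instead use the standard ``no small subgroups'' fact that \emph{every} continuous finite-dimensional representation of a profinite group already factors through a finite quotient, so $\wh{\cP(\G)}\cong\wh{\G}_{ff}$ outright. This is more elementary, avoids Theorem \ref{teo:onlyfinite} entirely for this part, and in fact shows that the finiteness hypothesis in (3) is only needed to invoke Theorem \ref{teo:peterweylforendcomp}, not for the identification of duals. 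Both routes are valid; yours is cleaner here.

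One small caveat on your treatment of part (1) without finiteness: the proof of Theorem \ref{teo:peterweylforendcomp} identifies $R(\f)$ with the dimension of the space of \emph{continuous} $\f$-class functions, and this step uses Lemma \ref{lem:clopeniffin}, which in turn assumes $R(\f)<\infty$. Your claim that the Peter--Weyl argument ``allows infinitely many non-zero $T_\rho$'' handles the direction $RT^f(\phi)=\infty\Rightarrow R_\cU(\phi)=\infty$ by contraposition, but the converse direction (if $RT^f(\phi)<\infty$ then $R_\cU(\phi)<\infty$) is not immediate from the $C^*(G)$ decomposition alone, since characteristic functions of Reidemeister classes need not be continuous when there are infinitely many of them. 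The paper glosses over this point as well, so you are at the same level of rigor; just be aware that the unconditional equality in (1) is slightly more delicate than either proof makes explicit.
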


\begin{proof}
The first statement follows immediately from 
Theorem \ref{teo:peterweylforendcomp}.

The second statement follows from the same theorem
in combination with Lemma \ref {lem:funczional}
and Theorem \ref{teo:onlyfinite}.

For the third statement observe that, by 
Theorem \ref{teo:peterweylforendcomp}, $R_\cP(\phi)$
is equal to the number of (finite-dimensional)
irreducible representations $\rho$ of $\cP(\G)$,
such that $\rho\sim \rho\circ \phi$. 
Since $R_\cP(\phi)<\infty$,
by Theorem \ref{teo:onlyfinite}, these representations are finite.
But finite representations of the profinite completion 
are just finite representation of $\G$ itself (cf.
the argument before Lemma \ref{lem:prof_and_uni_are_admiss}).
\end{proof}

\begin{rk}\label{rk:nikolovsegal}
{\rm
It seems possible to make the conditions weaker
in the case of finitely generated groups using
approaches from \cite{NikolovSegal2007AnnMat1}.
}
\end{rk}

\begin{rk}\label{rk:congranother}
{\rm
Combining Corollary \ref{cor:congrcompactif}
and Theorem \ref{teo:Rp=Ru=RT} one obtains another proof
of Theorem \ref{teo:congrue_rep_reide}.
}
\end{rk}

\begin{teo}\label{teo:coinc_of_rei_num}
If $R_\cU(\phi)<\infty$, then $R_\cU(\phi)=R_\cP(\phi)=
RT^{f}(\phi)=RT^{ff}(\phi)$.
In particular, these equalities are true, if $R(\phi)<\infty$.
\end{teo}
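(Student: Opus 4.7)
The plan is to apply Theorem \ref{teo:Rp=Ru=RT} in a three-step cascade, using the assumption $R_\cU(\phi)<\infty$ to license each step in turn. First I would invoke part (1) of that theorem, which is unconditional, to obtain $R_\cU(\phi)=RT^f(\phi)$; in particular $RT^f(\phi)$ is finite. Next, the hypothesis of part (2) is precisely the finiteness of $R_\cU(\phi)$, so it gives $R_\cU(\phi)=R_\cP(\phi)$, whence $R_\cP(\phi)$ is finite as well. This in turn activates part (3), yielding $R_\cP(\phi)=RT^{ff}(\phi)$. Concatenating these, $R_\cU(\phi)=R_\cP(\phi)=RT^f(\phi)=RT^{ff}(\phi)$, which is the desired chain of equalities.

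For the ``in particular'' clause, the plan is to reduce the hypothesis $R(\phi)<\infty$ to the hypothesis $R_\cU(\phi)<\infty$ by invoking Lemma \ref{lem:firstestim}. Since $\cU$ is admissible by Lemma \ref{lem:prof_and_uni_are_admiss}, that estimate yields $R_\cU(\phi)\le R(\phi)<\infty$, and the previous paragraph then applies verbatim.

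I do not anticipate a substantive obstacle here; the argument is essentially bookkeeping. The one point requiring a moment of care is the \emph{order} in which finiteness is propagated along the chain: the theorem's hypothesis is placed on $R_\cU(\phi)$ rather than on $R_\cP(\phi)$, and it is important that part (1) of Theorem \ref{teo:Rp=Ru=RT} is unconditional, so that finiteness of $RT^f(\phi)$ is obtained for free, while finiteness of $R_\cP(\phi)$ is bootstrapped from part (2) before part (3) can be invoked. All of the genuine content, namely Peter-Weyl, Schur's lemma, and the ``finitely many values forces finite representation'' argument of Theorem \ref{teo:onlyfinite}, has already been absorbed into the earlier statements.
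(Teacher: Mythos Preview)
Your proposal is correct and matches the paper's own proof, which simply says the main statement is a combination of the items from Theorem~\ref{teo:Rp=Ru=RT} and the ``in particular'' follows from Lemma~\ref{lem:firstestim}. You have merely spelled out the cascade of finiteness conditions more explicitly than the paper does.
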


\begin{proof}
The main statement is a combination of items from
Theorem \ref{teo:Rp=Ru=RT}. The other statement follows
from Lemma \ref{lem:firstestim}.
\end{proof}

This statement implies immediately

\begin{teo}\label{teo:coinc_of_zetas}
If $R_\cU(\phi^n)<\infty$, $n=1,2,\dots$,
then $R^\cU_\phi(z)=R^\cP_\phi(z)=
RT^{f}_\phi(z)=RT^{ff}_\phi(z)$.
In particular, these equalities are true, if $R(\phi^n)<\infty$
for all $n$.
\end{teo}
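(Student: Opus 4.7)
The plan is very short: this is essentially a termwise bootstrap from the numerical identity (Theorem \ref{teo:coinc_of_rei_num}) to the level of generating functions. There is no serious obstacle, because each zeta function in the statement is defined by the same exponential formula $\exp(\sum_{n\ge 1} c_n z^n/n)$ applied to the corresponding Reidemeister-type count.

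First I would verify the hypothesis propagates to every iterate. By assumption $R_\cU(\phi^n)<\infty$ for each $n\ge 1$, so Theorem \ref{teo:coinc_of_rei_num} applies directly with $\phi$ replaced by $\phi^n$, yielding
\[
R_\cU(\phi^n)=R_\cP(\phi^n)=RT^{f}(\phi^n)=RT^{ff}(\phi^n)
\]
for every $n\ge 1$. Note that the hypothesis of Theorem \ref{teo:coinc_of_rei_num} is exactly the finiteness of $R_\cU$ at the endomorphism under consideration, so no additional argument is needed; in particular the finiteness of $RT^f(\phi^n)$ and $RT^{ff}(\phi^n)$ (needed for the zeta functions on the right-hand side to be defined) is automatic from that theorem.

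Next I would read off the equality of zeta functions. By Definition \ref{dfn:admcom} and Definition \ref{dfn:represreidzeta}, each of the four zeta functions is the formal power series
\[
\exp\!\left(\sum_{n=1}^\infty \frac{c_n}{n}\,z^n\right)
\]
with $c_n$ equal to $R_\cU(\phi^n)$, $R_\cP(\phi^n)$, $RT^f(\phi^n)$, or $RT^{ff}(\phi^n)$ respectively. Termwise equality of the coefficients $c_n$ established above therefore gives the identity of the four generating functions
\[
R^\cU_\phi(z)=R^\cP_\phi(z)=RT^{f}_\phi(z)=RT^{ff}_\phi(z).
\]

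Finally, for the ``In particular'' clause, if $R(\phi^n)<\infty$ for all $n$, then Lemma \ref{lem:firstestim} (applied with $\cC=\cU$ and with $\phi^n$ in place of $\phi$) gives $R_\cU(\phi^n)\le R(\phi^n)<\infty$ for every $n$, so the main hypothesis holds and the conclusion follows from the first part. The only mildly non-routine point is ensuring that Lemma \ref{lem:firstestim} is indeed applicable to iterates, which is immediate since $\cU$ is admissible for all endomorphisms of $\G$ (Lemma \ref{lem:prof_and_uni_are_admiss}), hence for $\phi^n$ in particular.
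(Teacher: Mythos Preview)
Your proof is correct and follows exactly the approach the paper takes: the paper's own proof is the single line ``This statement implies immediately'' (referring to Theorem \ref{teo:coinc_of_rei_num}), and you have simply spelled out that immediate implication---applying Theorem \ref{teo:coinc_of_rei_num} to each iterate $\phi^n$ and then reading off equality of the generating functions termwise. Your handling of the ``in particular'' clause via Lemma \ref{lem:firstestim} is likewise just an unpacking of the corresponding clause in Theorem \ref{teo:coinc_of_rei_num}.
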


\begin{rk}\label{rk:details}
{\rm
One can immediately obtain several equalities 
for zeta functions under more weak conditions for powers,
similar to those from Theorem \ref{teo:Rp=Ru=RT}.
}
\end{rk}

\section{Calculating zeta function}\label{sec:calc}

We will need the following evident observation. Consider two power
series $\sum_{k=1}^\infty a_k z^k$ and 
$\sum_{k=1}^\infty b_k z^k$ with non-negative coefficients:
$a_k\ge 0$, $b_k\ge 0$. Suppose, the first series is 
uniformly convergent on a closed disk $D$ of radius $d$
and $b_k\le a_k$ for all $k$. Denote $z_0=d$.
Then we have the following estimation, for $z\in D$:
\begin{equation}\label{eq:eatimforseries}
\left|\sum_{k=s}^p b_k z^k\right|\le
\sum_{k=s}^p b_k |z|^k \le \sum_{k=s}^p a_k d^k. 
\end{equation}
In particular, the second series is uniformly convergent on $D$.

Let $R(\f^n)<\infty$ for any $n$, where $\f:G\to G$
is an endomorphism of a compact group $G$. In accordance with
the above results (Corollary \ref{cor:calccomp}), there is
a countable collection of normal $\phi$-invariant subgroups $G_i$
of $G$ of finite index such that $p_i: G\to G/G_i=:F_i$ gives a bijection of Reidemeister numbers.

\begin{teo}\label{teo:formulazeta}
Suppose, $R(\f^n)<\infty$ for any $n$, where $\f:G\to G$
is an endomorphism of a compact group $G$. 
Suppose, $\sum_k \frac{R(\f^k)}k z^k$ is uniformly convergent
on some closed disk $D$  of radius $d<1$.
Choose an above defined collection of subgroups, and let
$B_i$ denote the operator $B$ from
Section \ref{sec:prerem} on class functions on $F_i$.
Then 
$$
R_\f(z)=\lim_{i\to \infty} \frac{1}{\det(1-B_i z)},\qquad z\in D.
$$
\end{teo}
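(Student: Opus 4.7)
The plan is to exploit the already-proven rationality formula (\ref{eq:razzetaforfin}) on each finite quotient $F_i$, passing to the limit by a uniform-convergence argument on $D$. On $F_i$ the induced endomorphism $\bar\phi_i$ gives, by (\ref{eq:razzetaforfin}),
$$
\exp\!\left(\sum_{n=1}^\infty \frac{R(\bar\phi_i^n)}{n}z^n\right)=\frac{1}{\det(1-B_i z)}\qquad (|z|<1),
$$
so since $D\subset\{|z|<1\}$ and exponentiation is continuous, it suffices to show
$$
\sum_{n=1}^\infty \frac{R(\bar\phi_i^n)}{n}z^n\;\xrightarrow[i\to\infty]{}\;\sum_{n=1}^\infty \frac{R(\f^n)}{n}z^n
\qquad\text{uniformly on }D.
$$

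For this uniform convergence I will combine two facts. First, Lemma \ref{lem:epimreid} applied to the epimorphism $p_i\colon G\to F_i$ gives $R(\bar\phi_i^n)\le R(\f^n)$ for all $i$ and $n$, so the $F_i$-series is dominated termwise by the hypothesized uniformly convergent majorant, and the tail estimate (\ref{eq:eatimforseries}) (with $a_k=R(\f^k)/k$, $b_k=R(\bar\phi_i^k)/k$) uniformly controls its tail on $D$ in $i$. Second, the choice of $\{G_i\}$ preceding the theorem is intended to guarantee that for each fixed $n$ there exists $I(n)$ with $R(\bar\phi_i^n)=R(\f^n)$ for all $i\ge I(n)$. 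Given $\varepsilon>0$, I pick $N$ so large that $\sum_{n>N}R(\f^n)d^n/n<\varepsilon/2$, and $i\ge\max_{n\le N}I(n)$: the partial sums up to $N$ coincide on both sides, while the two tails are each at most $\varepsilon/2$ on $D$ by the dominating estimate. This yields uniform convergence of the logarithms, hence of their exponentials, concluding the proof.

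The main technical obstacle I anticipate is verifying that a single sequence $\{G_i\}$ can actually be arranged so that the bijection property holds simultaneously for every power $\f^n$ once $i$ is large. I would build such a sequence by applying Corollary \ref{cor:calccomp} to each $\f^n$ separately to obtain a finite-index subgroup $H_n\subset G$ giving the bijection for $\f^n$, replacing $H_n$ by its $\phi$-invariant refinement $\tilde H_n:=\bigcap_{k\ge 0}\phi^{-k}(H_n)$ (a finite intersection, since $[G:H_n]<\infty$), and setting $G_i:=\bigcap_{n=1}^i \tilde H_n$. A short squeeze argument using Lemma \ref{lem:epimreid} on the tower $G\twoheadrightarrow G/G_i\twoheadrightarrow G/H_n$ shows $R(\bar\phi_i^n)=R(\f^n)$ for $i\ge n$, and the surjective map of Reidemeister classes is then forced to be a bijection by equality of cardinalities. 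Once this compatibility is in place, the analytic argument above proceeds routinely.
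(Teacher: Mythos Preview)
Your argument is essentially the paper's own proof: dominate the $F_i$-series termwise via Lemma~\ref{lem:epimreid}, use eventual agreement on initial segments, control tails on $D$ by the majorant, and exponentiate using~(\ref{eq:razzetaforfin}). The paper carries this out pointwise (fix $z\in D$, pick $\delta$ from continuity of $\exp$ at the limit value, then $k_0$ for the tail, then $i_0$); your uniform version is the same estimate.

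The one point where you go beyond the paper is the explicit construction of a single sequence $\{G_i\}$ for which $R(\bar\phi_i^n)=R(\f^n)$ eventually in~$i$; the paper simply asserts the existence of such a collection in the paragraph before the theorem and then uses it in~(\ref{eq:sovpadcfin}). Your construction is correct in outline, but the parenthetical justification ``a finite intersection, since $[G:H_n]<\infty$'' is not quite right: finite index alone does not force $\bigcap_{k\ge 0}\phi^{-k}(H_n)$ to be a finite intersection. What does the job is that $H_n$, coming from Corollary~\ref{cor:calccomp} applied to $\f^n$, is $\f^n$-invariant, so $\phi^{-(k+n)}(H_n)\supseteq\phi^{-k}(H_n)$ and the intersection reduces to $\bigcap_{k=0}^{n-1}\phi^{-k}(H_n)$, which is a finite intersection of finite-index normal subgroups and is $\phi$-invariant. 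With that correction your squeeze argument on the tower $G\twoheadrightarrow G/G_i\twoheadrightarrow G/H_n$ goes through exactly as you wrote.
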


\begin{proof}
Take $z\in D$ and arbitrary $\e>0$.
Denote $u=\sum_k \frac{R(\f^k)}k z^k$ and choose $\delta>0$
such that $|\exp(u)-\exp(u')<\e$, if $|u-u'|<\delta$.
Choose $k_0$ so large that   
\begin{equation}\label{eq:ozenkahvo}
\left|\sum_{k=k_0}^\infty \frac{R(\f^k)}k d^k\right|
=\sum_{k=k_0}^\infty \frac{R(\f^k)}k d^k < \frac{\delta}{2}.
\end{equation}
Now find a sufficiently large $i_0$ such that
\begin{equation}\label{eq:sovpadcfin}
R(\f^k)=R((\f_{i})^k), \qquad k=1,\dots k_0-1,\qquad i=i_0, i_0+1,
\dots.
\end{equation}
For $k=k_0, k_0+1, \dots$, we have $R(\f^k)\ge R((\f_{i})^k)$
and hence, by the observation in the beginning of this section,
\begin{equation}\label{eq:sravhvo}
\left|\sum_{k=k_0}^\infty \frac{R((\f_{i})^k)}k z^k\right|\le
\left|\sum_{k=k_0}^\infty \frac{R(\f^k)}k d^k\right|<\frac{\delta}{2}.
\end{equation}
Evidently,
\begin{equation}\label{eq:ozenkadlfinp}
\left|\sum_{k=k_0}^\infty \frac{R(\f^k)}k z^k\right|
\le \sum_{k=k_0}^\infty \frac{R(\f^k)}k d^k
<\frac{\delta}{2}.
\end{equation}
From (\ref{eq:sovpadcfin}), (\ref{eq:sravhvo}), and
(\ref{eq:ozenkadlfinp})  we obtain, for $i\ge i_0$,
$$
\left|u-\sum_{k=1}^\infty \frac{R((\f_{i})^k)}k z^k\right|\le
\left|\sum_{k=k_0}^\infty \frac{R(\f^k)}k z^k\right|+
\left|\sum_{k=k_0}^\infty \frac{R((\f_{i})^k)}k z^k\right|
<\frac{\delta}{2}+\frac{\delta}{2}=\delta.
$$
Thus, by the choice of $\delta$,
\begin{equation}\label{eq:areclose}
\|R_\f(z)-R_{\f_i}(z)\|<\e,\qquad i\ge i_0.
\end{equation}
Applying (\ref{eq:razzetaforfin}) to $\f_i$ with $i\ge i_0$
we deduce from (\ref{eq:areclose}) the estimation
$$
\left|R_\f(z)-\frac{1}{\det(1-B_i z)}\right|<\e.
$$
This completes the proof.
\end{proof}

By the definition of $R^\cC_\f(z)$ and Theorem \ref{teo:coinc_of_zetas}
we obtain the following
\begin{cor}\label{cor:asimfor}
The formula from Theorem \ref{teo:formulazeta}
remains valid for $R^\cC_\phi(z)$, $RT^f_\phi(z)$, and $RT^{ff}_\phi(z)$,
if $R(\phi^n)<\infty$ for all $n$.
\end{cor}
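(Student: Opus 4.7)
The plan is to reduce the corollary to Theorem \ref{teo:formulazeta} applied to the compact-group endomorphism $\cP(\phi):\cP(\G)\to\cP(\G)$ on the profinite completion of $\G$. First, under the hypothesis $R(\phi^n)<\infty$ for all $n$, Theorem \ref{teo:coinc_of_zetas} furnishes the coincidence
$$
R^\cU_\phi(z) = R^\cP_\phi(z) = RT^f_\phi(z) = RT^{ff}_\phi(z),
$$
so it suffices to verify the asymptotic formula for a single representative. I choose $R^\cP_\phi(z)$, since by definition this is the Reidemeister zeta function $R_{\cP(\phi)}(z)$ of an endomorphism of the compact Hausdorff (in fact profinite) group $\cP(\G)$, to which Theorem \ref{teo:formulazeta} is directly applicable.

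Next, I would check that the hypotheses of Theorem \ref{teo:formulazeta} are inherited by $\cP(\phi)$. Theorem \ref{teo:coinc_of_rei_num}, invoked at each iterate, gives $R_\cP(\phi^n)=R(\phi^n)<\infty$ for all $n$, so the finiteness requirement is met. Moreover, the coefficient sequence of the defining series of $R_{\cP(\phi)}(z)$ is identical to that of $R_\phi(z)$, so any uniform-convergence condition on a closed disk $D$ of radius $d<1$ that implicitly appears in the setup of Theorem \ref{teo:formulazeta} transfers automatically from the hypotheses that justify the asymptotic for $R_\phi$.

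The third step is to apply Theorem \ref{teo:formulazeta} to $\cP(\phi)$. Corollary \ref{cor:calccomp}, invoked at each power $(\cP(\phi))^n$, supplies a closed normal finite-index $\cP(\phi)$-invariant subgroup of $\cP(\G)$ through which the Reidemeister classes of $(\cP(\phi))^n$ descend bijectively onto a finite quotient. Exploiting that $\cP(\G)$ is profinite—so that open normal subgroups form a directed cofinal family—one extracts a single decreasing cofinal chain $H_1\supset H_2\supset\cdots$ which works simultaneously for every $n$. Feeding this into Theorem \ref{teo:formulazeta} produces
$$
R^\cP_\phi(z) = \lim_{i\to\infty}\frac{1}{\det(1-B_iz)},\qquad z\in D,
$$
and combined with Step 1 this is exactly the asserted formula for $R^\cC_\phi(z)$, $RT^f_\phi(z)$ and $RT^{ff}_\phi(z)$.

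The one genuine technical obstacle is the diagonal extraction in Step 3: Corollary \ref{cor:calccomp} produces a subgroup adapted to a single endomorphism, whereas Theorem \ref{teo:formulazeta} requires a single cofinal system refining the subgroups demanded at every power. In the profinite setting this is essentially automatic, since given subgroups $H_{i,n}$ serving $(\cP(\phi))^n$ for each $n$, one simply puts $H_i\subseteq\bigcap_{n\le i}H_{i,n}$, a finite-index open normal $\cP(\phi)$-invariant subgroup of $\cP(\G)$; but the argument should be recorded explicitly so that the matrices $B_i$ in the two theorems are indisputably the same.
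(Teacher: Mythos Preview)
Your overall plan---apply Theorem~\ref{teo:formulazeta} to the compact-group endomorphism coming from a compactification, then invoke Theorem~\ref{teo:coinc_of_zetas} for the remaining coincidences---is exactly the paper's one-line justification (``By the definition of $R^\cC_\f(z)$ and Theorem~\ref{teo:coinc_of_zetas}'').

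However, there is a genuine factual mistake in Step~2. You write that Theorem~\ref{teo:coinc_of_rei_num} gives $R_\cP(\phi^n)=R(\phi^n)$. It does not: that theorem only asserts $R_\cU(\phi)=R_\cP(\phi)=RT^f(\phi)=RT^{ff}(\phi)$; the number $R(\phi)$ never appears among the equalities. In fact the equality $R_\cP(\phi)=R(\phi)$ is \emph{false} in general---the very examples in Section~\ref{sec:examcounter} give $R(\phi)=|F|$ while $R_\cP(\phi)=RT^f(\phi)=|\widehat F_{(1)}|$, and these differ whenever $F$ is non-abelian. What you actually need, and what is available, is the inequality $R_\cP(\phi^n)\le R(\phi^n)$ from Lemma~\ref{lem:firstestim}; together with the comparison observation~(\ref{eq:eatimforseries}) this gives both finiteness and uniform convergence on $D$ for the $R_\cP$-series. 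So the strategy survives, but the sentence as written is wrong and should be replaced by the inequality.

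Two smaller remarks. First, your worry about ``diagonal extraction'' is already absorbed into the statement and proof of Theorem~\ref{teo:formulazeta} itself (the choice of the cofinal family $G_i$ is part of its setup), so you need not redo it here. Second, if $\cC$ is meant to range over \emph{all} admissible compactifications, note that Theorem~\ref{teo:coinc_of_zetas} does not equate $R^\cC_\phi$ with $R^\cP_\phi$ for arbitrary $\cC$; for that case one simply observes, as the paper does, that $R^\cC_\phi(z)=R_{\cC(\phi)}(z)$ by definition and applies Theorem~\ref{teo:formulazeta} directly to the compact-group endomorphism $\cC(\phi)$, using Lemma~\ref{lem:firstestim} again for finiteness.
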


\section{Examples and counterexamples}\label{sec:examcounter}

In the remaining part of this section we will develop an
example from \cite{TroTwoEx}.

Let $F$ be a finite non-trivial group and $G=\oplus_{i\in\Z} F_i$,
$F_i\cong F$, i.e.
$$
\G=\{g=(\dots,g_{-1},g_{0},g_{1},g_2,\dots)\,|\, g_i \in F_i, \,
g_i\neq e \mbox{ only for a finite number of }i\}.
$$
Hence, $G$ is an infinitely generated residually
finite group.

Define $\phi$ to be the right shift: $\phi(g)_i=g_{i-1}$, $i\in\Z$.

\begin{lem}[{\cite[Lemma 3.1]{TroTwoEx}}]\label{lem:fromtwoex}
$R(\phi)=|F|<\infty$.
\end{lem}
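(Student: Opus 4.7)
The plan is to define an explicit invariant $Q\colon\G\to F$ whose fibres are exactly the Reidemeister classes of $\phi$, thereby identifying these classes with $F$. For $g=(g_i)_{i\in\Z}\in\G$, I set $Q(g):=g_Ng_{N-1}\cdots g_{-N}\in F$ for any $N$ with $\mathrm{supp}(g)\subset[-N,N]$, where each $F_i$ is identified with $F$ and the product is taken in \emph{decreasing} order of $i$. Enlarging $N$ only inserts identity factors, so $Q$ is well-defined; it is surjective, since the element with $f$ in position $0$ and $e$ elsewhere maps to $f$.

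For invariance, using $(xg\phi(x^{-1}))_i=x_ig_ix_{i-1}^{-1}$ and choosing $N$ so large that both $g$ and $x$ are supported in $[-N,N]$, I compute
$$
Q(xg\phi(x^{-1}))=(x_{N+1}g_{N+1}x_N^{-1})(x_Ng_Nx_{N-1}^{-1})\cdots(x_{-N}g_{-N}x_{-N-1}^{-1}).
$$
Because the order is decreasing, consecutive factors meet as $\cdots x_k^{-1}\cdot x_k\cdots$ and cancel; using $x_{N+1}=x_{-N-1}=e$ and $g_{N+1}=e$, the telescoping leaves exactly $g_Ng_{N-1}\cdots g_{-N}=Q(g)$. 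Hence $Q$ is constant on Reidemeister classes.

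To see that each class contains a ``concentrated'' representative, given $g$ with support in $[-N,N]$, I look for $x$ such that $h:=xg\phi(x^{-1})$ satisfies $h_i=e$ for all $i\ne0$. The equations $x_ig_ix_{i-1}^{-1}=e$ give the recursion $x_i=x_{i-1}g_i^{-1}$; solving it forward from $x_0$ and backward from $x_{-1}$, and requiring $x$ to have finite support, forces $x_0=g_Ng_{N-1}\cdots g_1$ and $x_{-1}^{-1}=g_{-1}g_{-2}\cdots g_{-N}$. Substituting into $h_0=x_0g_0x_{-1}^{-1}$ yields $h_0=g_Ng_{N-1}\cdots g_{-N}=Q(g)$. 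Thus each class contains the element with $Q(g)$ in position $0$ and $e$ elsewhere, and combined with invariance this gives $R(\phi)=|F|$.

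The main obstacle is the non-commutativity of $F$: the asymmetry of the right-shift, which places $x_{i-1}^{-1}$ to the right of the $i$-th factor but $x_{i-1}$ to the left of the $(i-1)$-st factor, forces that only the \emph{decreasing}-order product telescopes. Unordered or increasing-order versions of $Q$ fail to be Reidemeister invariants in the general non-abelian setting, and correctly handling this order is the key subtlety.
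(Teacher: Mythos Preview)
Your proof is correct and follows essentially the same route as the paper: both reduce an arbitrary element to a ``concentrated'' representative whose single nontrivial entry is the decreasing-order product $g_N g_{N-1}\cdots g_{-N}$, and both then argue that distinct concentrated elements lie in distinct classes. Your packaging via the explicit invariant $Q$ (shown constant on classes by telescoping) is a clean reformulation of the paper's direct check that two concentrated elements can be twisted conjugate only by the trivial $g$, hence only if they coincide.
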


\begin{proof}[Scetch of the proof]
For $a, g \in G$, the twisted conjugation has the form
$
(g a \phi(g^{-1}))_i=g_i a_i (g_{i-1})^{-1}.
$
For a twisted conjugation of two elements of the form
$$
\a:\quad\a_0=x,\quad \a_i=e\mbox{ for }i\neq 0, \qquad
\b:\quad\b_0=y,\quad \b_i=e\mbox{ for }i\neq 0,
$$
the above  formula gives
$$
g_0 x (g_{-1})^{-1}=y,\quad g_i (g_{i-1})^{-1}=e\mbox{ for }i\neq 0.
$$
So, $
g_0=g_1=\dots$, $g_{-1}=g_{-2}=\dots.
$
Since $g_i=e$ for large $i$, $g=(\dots,e,e,e,\dots)$. Thus, $\a$ and $\b$
are twisted conjugate if and only if they coincide.

Then we verify that any element $a=(\dots,a_i,\dots)$, $a_i=e$ for
$i<-m$ and $i>n$, is twisted conjugate to some element of the same form as $\a$ (with $x=a_{n}\dots a_{-m}$). 
\end{proof}

\begin{lem}[{\cite[Lemma 3.2]{TroTwoEx}}]\label{lem:fromtwoex2}
Suppose, $F$ has a trivial center. Then \emph{TBFT}$_f$ fails for $G$. 
\end{lem}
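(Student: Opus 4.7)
The strategy is to compute $RT^f(\phi)$ explicitly and show $RT^f(\phi)<|F|=R(\phi)$ (the latter established in Lemma~\ref{lem:fromtwoex}), thereby contradicting TBFT$_f$. The key input, presumably following \cite{TroTwoEx}, is the classification of finite-dimensional irreducible unitary representations of $G=\bigoplus_{i\in\Z}F_i$: each such $\rho$ has the form $\rho(g)=\bigotimes_{i\in\Z}\rho_i(g_i)$ for a tuple $(\rho_i)_{i\in\Z}$ with $\rho_i\in\widehat{F}$ and $\dim\rho_i=1$ for all but finitely many $i$ (this finiteness being forced by finite-dimensionality of $\rho$), and two such tuples yield equivalent representations iff $\rho_i\simeq\rho_i'$ for every $i\in\Z$. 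One justifies this by noting that $\rho(F_i)$ and $\rho(F_j)$ commute in $U(V)$ for $i\neq j$, applying Peter--Weyl to the restriction $\rho|_{G_n}$ on each finite subproduct $G_n=\bigoplus_{|i|\le n}F_i$ (whose irreducibles are external tensor products), and passing to the limit along $G=\bigcup_n G_n$.

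Next, since $\phi$ is the right shift, $(\rho\circ\phi)(g)=\bigotimes_i\rho_i(g_{i-1})=\bigotimes_j\rho_{j+1}(g_j)$, so $\rho\circ\phi$ corresponds to the shifted tuple $(\rho_{j+1})_j$. Hence $\rho\sim\rho\circ\phi$ forces $\rho_i\simeq\rho_{i+1}$ for every $i\in\Z$, i.e.\ all the $\rho_i$ coincide with a single class $\sigma\in\widehat{F}$. Combined with the requirement that almost all of them be one-dimensional, this forces $\dim\sigma=1$; conversely every one-dimensional $\sigma$ (equivalently, every $\sigma\in\widehat{F^{\mathrm{ab}}}$) produces such a fixed representation. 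Therefore $RT^f(\phi)=|\widehat{F^{\mathrm{ab}}}|=|F^{\mathrm{ab}}|$.

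Finally, since $F$ has trivial center and is nontrivial, it cannot be abelian, so $|F^{\mathrm{ab}}|<|F|$, giving $RT^f(\phi)<R(\phi)$, which contradicts TBFT$_f$. The main difficulty is step one: ruling out ``exotic'' finite-dimensional irreducibles beyond the tensor products above, which requires carefully combining the commuting structure of the $\rho(F_i)$ with a consistency argument across the exhausting chain $G_n\subset G_{n+1}$; everything afterwards is a direct calculation.
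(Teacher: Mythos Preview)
Your proof is correct and follows essentially the same route as the paper: the paper does not prove this lemma directly (it cites \cite{TroTwoEx}) but immediately afterward establishes the stronger Theorem~\ref{teo:counterex} via exactly your argument---identify $\widehat{G}$ with $\prod_i\widehat{F_i}$, observe that $\widehat\phi$-fixed points are constant tuples, note that finite-dimensionality forces the common factor to be one-dimensional, and conclude $RT^f(\phi)=|\widehat{F}_{(1)}|<|F|=R(\phi)$ when $F$ is non-abelian. Your deduction that trivial center implies non-abelian is the only additional step needed to specialize to the lemma as stated, and the paper simply asserts the dual-space description as a ``well-known exercise'' where you sketch the tensor-product classification more carefully.
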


We will obtain below a more strong statement.

It is an easy well-known exercise to prove the following statement:
$$
\widehat{\oplus_i F_i}=\prod_i \widehat{F_i},  
$$
where $\prod$ means the topological (Tikhonoff) product.
Evidently, $\widehat{\phi}$ is the \emph{left} shift.
Its fixed points evidently are elements of $\prod$
with the same value $[\rho]$ for all $i$.
The corresponding representations $\rho^\infty$ are
finite-dimensional if and only if $\rho$ is one-dimensional.
If we denote by $\widehat{F}_{(1)}$ the subset of
$\widehat{F}$, formed by one-dimensional representations,
we obtain:
\begin{equation}\label{eq:sizeofRT}
RT(\phi)=|\widehat{F}|,\qquad RT^f(\phi)=|\widehat{F}_{(1)}|, 
\end{equation}
where we use $|.|$ for cardinality of a set.

Immediately from (\ref{eq:sizeofRT}) 
and Lemma \ref{lem:fromtwoex}
we obtain the following
statement.

\begin{teo}\label{teo:counterex}
If $F$ is not abelian, the above group $\oplus F$ is
an infinitely generated residually finite group, for which
neither TBFT, nor TBFT$_f$ is true. 
\end{teo}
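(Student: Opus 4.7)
The plan is to reduce the theorem directly to what has already been assembled in the excerpt: Lemma \ref{lem:fromtwoex} for $R(\phi)=|F|$, the identification (\ref{eq:sizeofRT}) of $RT(\phi)$ and $RT^f(\phi)$ with invariants of $F$, and the elementary facts that characterize non-abelian finite groups. So the main strategy is simply to compare numbers.

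First I would handle the structural claims. Residual finiteness is immediate: for a non-identity $g=(g_i)\in\G$, only finitely many coordinates $g_{i_1},\dots,g_{i_k}$ are non-trivial, so the projection $\G\to F_{i_1}\times\cdots\times F_{i_k}$ onto a finite group separates $g$ from $e$. That $\G$ is infinitely generated is obvious since it contains the independent subgroups $F_i\cong F\ne\{e\}$ indexed by $\Z$.

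Next I would collect the three numerical inputs. By Lemma \ref{lem:fromtwoex} we have $R(\phi)=|F|$. By (\ref{eq:sizeofRT}), $RT(\phi)=|\wh F|$, which equals the number of conjugacy classes of $F$; and $RT^f(\phi)=|\wh F_{(1)}|=[F:F']$, the number of one-dimensional irreducible unitary representations of $F$. To falsify TBFT I need to argue $|\wh F|<|F|$ when $F$ is non-abelian: non-abelianness guarantees a non-central element whose conjugacy class has size $\ge 2$, so the number of conjugacy classes is strictly less than $|F|$. For TBFT$_f$ the corresponding inequality $[F:F']<|F|$ follows from $F'\ne\{e\}$, which is precisely the non-abelianness assumption.

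Putting these together, for non-abelian $F$ we get the strict chain
\begin{equation*}
RT^f(\phi)=[F:F']\;<\;|F|=R(\phi),\qquad RT(\phi)=|\wh F|\;<\;|F|=R(\phi).
\end{equation*}
Since TBFT would require $R(\phi)=RT(\phi)$ and TBFT$_f$ would require $R(\phi)=RT^f(\phi)$ (as $R(\phi)$ is finite), both assertions fail simultaneously for $\phi\in\Aut(\G)$. The only place any real thought is needed is in double-checking the asserted identification $\wh{\oplus_i F_i}=\prod_i\wh{F_i}$ together with the claim that among the fixed constant sequences $\rho^\infty$ only the one-dimensional $\rho$ yield finite-dimensional representations — but this is precisely what is stated in the paragraph preceding (\ref{eq:sizeofRT}), so for this proof no additional work is needed and the conclusion is immediate.
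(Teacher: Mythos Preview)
Your proof is correct and follows the same approach as the paper: the paper simply states that the theorem follows ``immediately from (\ref{eq:sizeofRT}) and Lemma \ref{lem:fromtwoex}'', and you have spelled out exactly the comparison $|\wh F|<|F|$ and $|\wh F_{(1)}|<|F|$ for non-abelian $F$ that makes this immediate. The structural claims (residual finiteness, infinite generation) are already noted in the paper just before Lemma~\ref{lem:fromtwoex}, so they need not be reproved here, but including them does no harm.
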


Consider $\phi^n$ for this $\phi$. Then 
is the left shift by $n$ positions and $\oplus F$
decomposes in a sum of $n$ $\phi^n$-invariant
summands and at each of them $\phi^n$ acts in the
same way as $\phi$ on $\oplus F$, i.e. by the
left shift (by $1$ position).
Thus, we have the following explicit formulas:
\begin{equation}\label{eq:explex}
\begin{array}{c}
R(\phi^n)=(R(\phi))^n=|F|^n,\qquad RT(\phi^n)=(RT(\phi))^n=|\widehat{F}|^n,\\
 RT^f(\phi^n)=(RT^f(\phi))^n=|\widehat{F}_{(1)}|^n.
 \end{array}
\end{equation}

\begin{prop}\label{prop:calczetaex}
For the above group and automorphism $\phi$, zeta functions
$R_\phi(z)$, 
$R^\cU_\phi(z)=R^\cP_\phi(z)=RT^f_\phi(z)=RT^{ff}_\phi(z)$,
and $RT_\phi(z)$ are rational. More precisely,
$$
R_\phi(z)=\frac{1}{1-|F|z},\qquad
R^\cU_\phi(z)=\frac{1}{1-|\widehat{F}_{(1)}|z},
\qquad
RT_\phi(z)=\frac{1}{1-|\widehat{F}|z}.
$$
\end{prop}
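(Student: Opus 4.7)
The plan is to read off everything from the explicit formulas in (\ref{eq:explex}), invoke the coincidence theorem for zeta functions, and apply the standard identity
\[
\exp\!\left(\sum_{n=1}^\infty \frac{a^n}{n}z^n\right)
=\exp(-\log(1-az))=\frac{1}{1-az}
\qquad (|z|<1/a),
\]
which is the same algebraic step used in (\ref{eq:razzetaforfin}). Since in (\ref{eq:explex}) we have $R(\phi^n)=|F|^n$, $RT(\phi^n)=|\widehat F|^n$ and $RT^f(\phi^n)=|\widehat F_{(1)}|^n$, each of $a=|F|$, $a=|\widehat F|$, $a=|\widehat F_{(1)}|$ is finite and the three power series converge on $|z|<1/a$, so the identity gives the three displayed rational formulas directly.

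For the middle chain of equalities $R^\cU_\phi(z)=R^\cP_\phi(z)=RT^f_\phi(z)=RT^{ff}_\phi(z)$, I would simply cite Theorem \ref{teo:coinc_of_zetas}: its hypothesis $R(\phi^n)<\infty$ for all $n$ is precisely the first part of (\ref{eq:explex}), so the theorem applies verbatim. This also explains why $RT^{ff}_\phi(z)=\frac{1}{1-|\widehat F_{(1)}|z}$, even though we did not separately compute $RT^{ff}(\phi^n)$.

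If one prefers a direct verification that in this example $RT^f(\phi^n)=RT^{ff}(\phi^n)$ without going through Theorem \ref{teo:coinc_of_rei_num}, the argument is short: every one-dimensional unitary representation $\rho$ of the finite group $F$ factors through the finite abelian quotient $F/[F,F]$ and so takes values in a finite cyclic subgroup $\mu_m\subset\mathrm{U}(1)$; the associated fixed point $\rho^{\infty}$ of $\widehat{\phi^n}$, corresponding to the constant (and hence $n$-periodic) sequence $(\rho,\rho,\dots)\in\prod_i \widehat{F_i}$, is given by $g\mapsto\prod_i \rho(g_i)$, which still takes values in $\mu_m$ because each $g\in\bigoplus F_i$ has finite support. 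Hence $\rho^\infty$ is finite, so every $\widehat{\phi^n}$-fixed point counted by $RT^f$ is also counted by $RT^{ff}$.

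There is no real obstacle: the only thing to watch is that one must justify convergence before extracting the closed form (otherwise the identity holds only as formal power series, which is still fine for the rationality claim); the estimate $|z|<1/|F|\le 1/|\widehat F|\le 1/|\widehat F_{(1)}|$ shows that on $|z|<1/|F|$ all three zeta functions are honest holomorphic functions equal to the stated rational ones, and rationality as formal power series then follows for all $z$ outside the poles by analytic continuation.
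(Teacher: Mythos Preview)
Your proof is correct and follows essentially the same approach as the paper: the paper's proof simply invokes (\ref{eq:explex}) and the identity $\exp\bigl(\sum_{n\ge 1}(az)^n/n\bigr)=1/(1-az)$ for $R_\phi(z)$, then says ``Similarly for the remaining cases.'' Your version is more explicit in citing Theorem~\ref{teo:coinc_of_zetas} for the chain $R^\cU_\phi(z)=R^\cP_\phi(z)=RT^f_\phi(z)=RT^{ff}_\phi(z)$ and in discussing convergence, but the argument is the same.
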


\begin{proof}
Indeed, by (\ref{eq:explex}),
$$
R_\phi(z)=\exp\left(\sum_{n=1}^\infty \frac{(|F|z)^n}{n}\right)
=\exp(-\log(1-|F|z))=\frac{1}{1-|F|z}.
$$
Similarly for the remaining cases, using (\ref{eq:explex}). 
\end{proof}

\end{document}